\let\iso\cong
\def\cong{\equiv}
\def\det{\mathop{\rm det}\nolimits}
\def\Hom{\mathop{\rm Hom}\nolimits}
\def\odd{\scriptstyle{\rm odd}}
\def\even{\scriptstyle{\rm even}}
\def\e{\varepsilon}
\def\xbar{\bar{x}}
\def\D{\Delta}
\def\tensor{\otimes}
\def\Z{\mathbb{Z}}
\def\Q{\mathbb{Q}}
\def\R{\mathbb{R}}
\def\RorG{{R\,{\rm or}\,G}}
\def\sset{\subseteq}
\def\qf#1{\langle#1\rangle}
\def\legendre#1#2{\bigl(\frac{#1}{#2}\bigr)}
\def\leftcompartment#1{[#1}
\def\rightcompartment#1#2{#1]\llap{\phantom{1}}^{\phantom{1}}_{#2}}
\def\compartment#1#2{[#1]\llap{\phantom{1}}^{\phantom{1}}_{#2}}
\def\I{{\rm I}}
\def\II{{\rm I\!I}}
\newtheorem{theorem}{Theorem}[section]
\newtheorem{lemma}[theorem]{Lemma}
\newtheorem{corollary}[theorem]{Corollary}
\theoremstyle{remark}
\newtheorem*{remark}{Remark}
\newtheorem{example}[theorem]{Example}
\numberwithin{equation}{section}
\newcommand{\defn}[1]{\marginlabel{\footnotesize #1}{\it#1}}
\newcommand{\nodefn}[1]{{\it #1}}
\newcommand{\notation}[1]{\marginlabel{#1}#1}
\renewcommand{\defn}[1]{\nodefn{#1}}
\renewcommand{\notation}[1]{#1}
\begin{document}
\title[The Conway-Sloane 2-adic calculus]{The Conway-Sloane calculus for 2-adic lattices}
\author{Daniel Allcock}
\address{Department of Mathematics\\University of Texas at
  Austin}
\email{allcock@math.utexas.edu}
\urladdr{http://www.math.utexas.edu/\textasciitilde allcock}
\author{Itamar Gal}
\address{Department of Mathematics\\University of Texas at Austin}
\email{igal@math.utexas.edu}
%\urladdr{HOMEPAGE}
%
\author{Alice Mark}
\address{School of Mathematical and Statistical Sciences\\
  Arizona State University}
\email{amark3@asu.edu}
%\urladdr{HOMEPAGE}
%
\thanks{First author supported by NSF grant DMS-1101566}
\subjclass[2010]{11E08}
\date{May 4, 2020}
%\date{August 30, 2017}
% \date{May 23, 2017}
%\date{November 14, 2015}

\begin{abstract}
  We motivate and explain the system introduced by Conway and Sloane for
  working with quadratic forms over the $2$-adic integers, and prove
  its validity.  Their system is far better for actual calculations
  than earlier methods, and has been used for many years, but  no proof has been published before now.
\end{abstract}
\maketitle

\section{Introduction}
\label{sec-intro}

\noindent
Our goal in this paper is to explain the system that
Conway and Sloane %\cite[Ch.~15]{SPLAG}
developed for working with lattices (quadratic forms) over 
the ring of $2$-adic integers $\Z_2$.   Algorithms were already known for 
determining when two lattices were isometric, and for finding a canonical form for each one.
But these were clumsy.  In his influential book on quadratic forms, Cassels even wrote 
about $2$-adic integral canonical forms:
``only the masochist is invited to read the rest of this section'' \cite[\S8.4]{Cassels}.
To this day, $2$-adic lattices retain their reputation for complexity.

But
the $2$-adic part of a lattice over~$\Z$ is its most
important part.  Many questions about $\Z$-lattices reduce to
$p$-adic versions of the same questions, where $p$ varies over the primes.  
For example, consider the question of whether one $\Z$-lattice is isometric
to another.  We restrict to the case of rank${ }\geq3$ and some fixed indefinite signature, 
because then it is (almost) true
that an isometry exists if and only if one exists $p$-adically for each $p$.
Most questions
about $p$-adic lattices are easy for odd $p$, including this isomorphism
problem.  So  all the real work takes place at $p=2$.
Other examples of questions with this same flavor are whether a lattice 
represents a given number, or whether
one lattice admits another as a direct 
summand (or as a primitive sublattice).  
See section~\ref{sec-larger-picture} for a little more on this larger picture.

The Conway-Sloane calculus \cite[ch.~15]{SPLAG} is
much simpler than previous approaches to $2$-adic lattices,
for example the original papers on invariants and canonical forms by
Pall \cite{Pall} and Jones \cite{Jones}.  
It is widely used in modern applications,
for example \cite{Allcock}\cite{BEF}\cite{Hohn}\cite{Turkalj}.  
Their innovation was to  introduce  
``oddity fusion'' and ``sign walking'' operations, which are notationally simple and
generate all equivalences.  
Strangely, their formal statement of results (their Theorem~10)
completely avoids these
 operations.   So it has the same
 unwieldy feel as the papers of Pall and Jones just mentioned.
 Proofs of their theorem appear in \cite{Xu}
 and in
Bartels' unpublished
dissertation \cite{Bartels}.
But the literature contains no treatment of the calculus as it is actually used.
We hope
to make it more accessible.
What is new here are the ``givers'' and ``receivers'' of section~\ref{sec-fine-symbols},
and the ``signways'' of section~\ref{sec-2-adic-symbols}.
In particular, we use signways to correct an error in their formulation of canonical forms.

Here is a fairly detailed overview of the calculus.  Our goal is 
to show what it looks like and  what it involves, 
rather than to explain it
properly.  For that, see the formal development beginning in section~\ref{sec-preliminaries}.

\smallskip
\emph{Unimodular lattices:}
The first step in all approaches to $\Z_2$-lattices is to classify the unimodular ones.
Conway and Sloane indicate them by symbols like $L=1^{+2}_2$ or $1^{-3}_3$ or $1^{-4}_\II$.
The main number $1$ says that $L$ is unimodular (over~$\Z_2$).  
If $L$ is even, which is to say that all norms are even, then the subscript is~$\II$.
Otherwise, $L$ is diagonalizable and the subscript is the 
\emph{oddity} $o(L)$ of $L$, meaning the sum mod~$8$ of the diagonal terms
in any diagonalization.  As defined in this paper, the oddity 
is not a lattice invariant in general.
But for unimodular lattices it is, because for them it coincides with
the $2$-signature, which \emph{is} an invariant.  See 
section~\ref{sec-preliminaries} for more discussion.
The superscript
is not a signed number, but rather a sign and a separate nonnegative integer.
The 
integer  is $\dim L$.  The 
sign is
$+$ or~$-$ according to whether $\det(L)\cong\pm1$ or~$\pm3$ mod~$8$.
The sign, dimension and oddity
turn out to determine the isometry class of~$L$.
We prove this in Theorem~\ref{thm-unimodular-lattices}.

An example of the notation: 
the lattices with diagonal inner product
matrices $\qf{1,-1,3}$, $\qf{-1,-1,-3}$ and $\qf{3,3,-3}$
are all unimodular, with determinant~$\pm3$ (up to squares), 
hence sign~$-$.  They also have
dimension~$3$ and oddity $3\in\Z/8$.
So they are isometric to each other, and we write
$1^{-3}_3$ to represent their isometry class.  We built these
lattices by starting with the symbol
$1^{-3}_3$ and choosing three
terms inside $\qf{{\ldots}}$ to have  product~$\pm3$ 
and sum~$3$ 
(both mod~$8$).  In this way it is always
easy to construct representative
lattices for any symbol $1^{\cdots}_{\cdots}$.

The symbols behave cleanly under direct sum: signs multiply
and dimensions and subscripts  add.  For subscripts this means addition in $\Z/8$, 
together with the special rule $\II+t=t$.
For example, $1^{+2}_2\oplus1^{-3}_3\oplus1^{-4}_\II\iso1^{+9}_5$.

\smallskip
\emph{Jordan decompositions:}
A general $\Z_2$-lattice can be expressed 
as a direct sum, where the terms are 
got by rescaling unimodular lattices by  distinct
powers of~$2$.
This is called a Jordan decomposition and the terms are called Jordan constituents.  
Conway and Sloane use symbols like
$1^{+2}_\II$, $2^{-2}_{2}$, $4^{+3}_1$ and $64^{-2}_\II$ to indicate them.  
These lattices
are got from the unimodular lattices with the same decorations, 
namely
$1^{+2}_\II$, $1^{-2}_{2}$, $1^{+3}_1$ and $1^{-2}_\II$,
by scaling inner products
by $1$, $2$, $4$ and $64$ respectively.
The \emph{scale} of each term means this scaling factor.
The \emph{type} is $\I$ or~$\II$ according to whether the 
unimodular lattice is odd or even.
A general $\Z_2$-lattice is a direct sum of such terms, for example
\begin{equation}
  \label{EqExample}
1^{2}_\II\, 
2^{-2}_4 4^{3}_{-1}
16^{1}_1\,
32^{2}_\II\, 
64^{-2}_\II\,
128^{1}_{-1} 256^{1}_1
512^{-4}_\II
\end{equation}
where we have suppressed $+$ signs in superscripts and $\oplus$ symbols between the terms.
A \defn{Jordan symbol} means an expression like \eqref{EqExample},
describing a Jordan
decomposition.
We will use this example many times: it is complicated enough to illustrate
many phenomena.  
%We call an expression like \eqref{EqExample} a \emph{Jordan symbol}.

\bigskip
There are two main ways that the case of $p$ an odd prime is simpler than the $p=2$ case.
The first is that the unimodular classification is simpler: one needs no subscripts.
The second is that the 
Jordan decomposition is unique up to isometry.  So when $p$ is odd, understanding a $p$-adic lattice
amounts to a writing down something like \eqref{EqExample} without subscripts.  
Equivalences between distinct Jordan decompositions is the subtle part of
$2$-adic lattice theory.  Conway and Sloane introduced \emph{oddity fusion} and
\emph{sign walking} to organize these equivalences.

\smallskip
\emph{Oddity fusion:}
An example of nonuniqueness of Jordan decomposition is
\begin{equation}
  \label{EqOddityFusion}
2^{-2}_4 4^{3}_{-1}
\,\iso\,
2^{-2}_2 4^{3}_{1}
\,\iso\,
2^{-2}_{-2} 4^{3}_{5}
\end{equation}
These are the same except for the oddities of the terms,
and in all three cases the sum of
the oddities is $3$ mod~$8$.  
This illustrates a general phenomenon called oddity fusion:
when the scales of a sequence of Jordan constituents are consecutive powers of~$2$, and the
subscripts are oddities 
rather than ``$\II$'', then those constituents ``share'' their oddities.

To express this more formally
we say two terms of type~$\I$ are in the same \defn{compartment}
if the terms at all intermediate scales also have
type~$\I$.  In example
\eqref{EqExample} there are three compartments: one consisting of the
terms of scales $2$ and~$4$,
one consisting of the term of scale~$16$, and one consisting of
the terms of scales $128$ and~$256$.  
(The scale~$8$ term is unwritten because it is
$0$-dimensional. But it has type~$\II$, hence separates
$2$ and~$4$ from~$16$.)
Usually one indicates the
compartments with brackets, for example
\begin{equation}
  \label{EqBracketedExample}
1^{2}_\II\, 
    [2^{-2}_4 4^{3}_{-1}]
    [16^{1}_1]\,
32^{2}_\II\, 
64^{-2}_\II\,
    [128^{1}_{-1} 256^{1}_1]
512^{-4}_\II
\end{equation}
The brackets are usually omitted for
a compartment consisting of a single term, so here
would omit the brackets that enclose $16^1_1$.

``Oddity fusion'' means that two Jordan symbols $J,J'$, which are
the same except for the subscripts in a compartment, represent
isometric lattices if the sum of the oddities over that compartment
in~$J$ is equal to the corresponding
sum in~$J'$.
Therefore we record this sum (the compartment's oddity)
rather than the oddities of the individual terms.  We attach it
as a subscript to the closing bracket.  
For example,
we write $\compartment{2^{-2} 4^3}{3}$ rather than any 
of the three Jordan symbols in \eqref{EqOddityFusion}.  
This notation
displays less information, while still capturing the
isometry class, so it 
is more canonical.  
After oddity fusion, our example \eqref{EqBracketedExample} becomes
\begin{equation}
  \label{EqFusedExample}
1^2_\II\, 
\compartment{2^{-2} 4^3}{3}
16^1_1\,
32^2_\II\, 
64^{-2}_\II\,
\compartment{128^{1}\, 256^{1}}{0}
512^{-4}_\II
\end{equation}
Most of the simplicity of the
Conway-Sloane approach comes from the use of oddity fusion.
We call a symbol like \eqref{EqFusedExample} a
\emph{$2$-adic symbol}.

\smallskip
\emph{Sign walking:}
Oddity fusion does not generate all equivalences between $2$-adic 
Jordan decompositions.
For example, 
 \eqref{EqFusedExample}
turns out to be isometric to each of
\begin{gather}
  \label{EqWalked1}
\underbracket[.7pt]{1^{-2}_\II\, 
\leftcompartment{2^2}}\rightcompartment{4^3}{-1}
16^1_1\,
32^2_\II\, 
64^{-2}_\II\,
\compartment{128^{1}\, 256^{1}}{0}
512^{-4}_\II
\\
%----------------------------
  \label{EqWalked2}
1^2_\II\, 
\compartment{\underbracket[.7pt]{2^{2} 4^{-3}_{\phantom{1}}}}{-1}
16^1_1\,
32^2_\II\, 
64^{-2}_\II\,
\compartment{128^{1}\, 256^{1}}{0}
512^{-4}_\II
\\
  \label{EqWalked3}
1^2_\II\, 
\leftcompartment{2^{-2}}\underbracket[.7pt]{\rightcompartment{4^{-3}}{-1}
16^{-1}_{-3}}\,
32^2_\II\, 
64^{-2}_\II\,
\compartment{128^{1}\, 256^{1}}{0}
512^{-4}_\II
\end{gather}
In each case we have negated
the signs of two adjacent scales
of \eqref{EqFusedExample}, and changed by~$4$ 
the oddity of each compartment involved.  
The underbrackets indicate the terms whose signs were changed.
In \eqref{EqWalked1} and \eqref{EqWalked2}
the only compartment of
\eqref{EqFusedExample}
involved was $[2^{-2}4^3]_3$, so we changed its oddity by~$4$.
In \eqref{EqWalked3} the compartment $16^1_1$ was also involved,
so we also changed its oddity by~$4$.

The rules
for which pairs of terms admit such a \emph{sign walk} are subtle enough that
we postpone them to section~\ref{sec-2-adic-symbols}.  But to illustrate the flexibility they provide,
we show which terms of our example
can  interact with
each other via some chain of sign walks:
\begin{equation}
    \label{EqSignways}
\underbracket[.7pt]{
1^2_\II\, 
\compartment{2^{-2} 4^3}{3}
16^1_1\,
32^2_\II
}
\,\underbracket[.7pt]{
64^{-2}_\II\,
\leftcompartment{128^{1}}}\,
\underbracket[.7pt]{\rightcompartment{256^{1}}{0}
512^{-4}_\II
}
\end{equation}
We call these groups of terms \emph{signways}, suggesting 
highways along which signs can move (or cancel).
In the language of Conway and Sloane, the classification of $2$-adic lattices amounts to the
theorem that  sign walking generates all equivalences between $2$-adic symbols.  
(Theorem~\ref{thm-equivalences-of-2-adic-symbols}.)

Some equivalence relations are like mazes, where
it is not clear which ``moves'' to make when seeking an equivalence between two objects, or
perhaps  only an arcane recipe for these moves is available.
This is the nature of earlier classifications of $2$-adic lattices.  Happily,
sign walking is simple.  For any given $2$-adic symbol, the
sign walks generate an elementary abelian 
$2$-group, acting simply transitively on
the $2$-adic symbols that are equivalent to it.
(See the proof of 
Theorem~\ref{thm-same-signs-imply-isometric-same-as-being-identical}.)
In \eqref{EqSignways} this group is $(\Z/2)^4\times(\Z/2)\times(\Z/2)$.
The $(\Z/2)^4$ factor changes signs in the first signway, 
arbitrarily subject to maintaining the overall sign.
The 
factors $\Z/2$ play the same role for the other signways. 
The alterations of oddities that accompany any given sign
walk are easy to figure out.

One can use sign walking to define a canonical form: 
walk all the $-$ signs as far left as possible,
canceling pairs of such signs when possible.  Then all 
signs will be $+$ except perhaps for
the first terms of the signways.
For \eqref{EqSignways} this canonical form is
\begin{equation*}
	%\label{eq-canonical-form-of-main-example}
1^{-2}_\II\, 
\compartment{2^{2} 4^3}{-1}
16^1_1\,
32^2_\II\, 
64^{-2}_\II\,
\compartment{128^{1}\, 256^{-1}}{4}
512^{4}_\II
\end{equation*}

\medskip
The main virtues of the Conway-Sloane notation are that (i) it allows easy passage
between the notation and the lattices, (ii) it
behaves well under direct sum and scaling, and duality too, (iii) no more
information is displayed than necessary,
and (iv) rather than being constrained to a single canonical form, one
can easily pass between all possible $2$-adic symbols for a particular lattice.
See 
Example~\ref{EgOrthocomplement} for an illustration of (iv): we find all the
$\Z_2$-lattices whose sum with $\qf{2,2}$ is isometric to \eqref{EqFusedExample}.

\medskip
After some (strictly)
motivational background  in section~\ref{sec-larger-picture},
we cover some technical preliminaries in section~\ref{sec-preliminaries}.
Then section~\ref{sec-fine-symbols} defines 
what we call a \emph{fine decomposition}
of a $2$-adic lattice and describes some moves between them.  In section~\ref{sec-Jordan-symbols} 
we classify the
unimodular lattices and introduce oddity fusion.  In section~\ref{sec-2-adic-symbols} we define
$2$-adic symbols and prove that sign walking generates all equivalences between them.
We also discuss canonical forms and how to define some numerical invariants of $2$-adic lattices.
The final section is devoted to the proof of Theorem~\ref{thm-equivalence-of-fine-symbols}.

This note developed from part of a course on quadratic forms given by
the first author at the University of Texas at Austin, with his lecture
treatment greatly improved by the second and third authors.

\section{The larger picture}
\label{sec-larger-picture}

\noindent
This section is meant to describe how the $2$-adic lattice theory fits into 
the larger theory of
integer quadratic forms.  It is not needed later in the paper.

A lattice over $\Z$ or the $p$-adic integers $\Z_p$ means
a free module equipped with a symmetric bilinear pairing that takes values in
the fraction field $\Q$ or $\Q_p$.  
An \emph{isometry} from one such
lattice to another means a module isomorphism that preserves inner products.
In many situations one wants to understand whether two $\Z$-lattices are isometric.  
If $L$ is a $\Z$-lattice, then $L\tensor\Z_p$ is a $\Z_p$-lattice.  If $L'$ is another $\Z$-lattice,
then $L,L'$ are said to lie in the same genus if they have the same signature and
$L\tensor\Z_p$ and $L'\tensor\Z_p$ are isometric for all primes~$p$.  Isometric $\Z$-lattices 
obviously lie in the same genus.

The famous Hasse-Minkowski theory of quadratic forms over~$\Q$
says that two quadratic spaces over~$\Q$
are isomorphic if and only if they are
isomorphic over $\R$ and every~$\Q_p$.  It would be unreasonable
to hope for the corresponding result for lattices over~$\Z$: that
the genus determines the isomorphism class.  What is surprising
is how close to truth this comes.

Until work of Eichler in the 1950s, it was open whether 
the genus determines the isomorphism class of an indefinite $\Z$-lattice
of dim\-en\-sion${ }\geq3$.
Eichler discovered a subtle equivalence relation, whose equivalence
classes are called spinor genera.  Each genus consists of finitely many spinor genera, and each spinor genus
consists of finitely many isometry classes of lattices.  But some mild hypotheses
promote both cases of ``finitely many'' to ``one'':

\begin{theorem}
  \label{ThmDistinctPowers}
    An indefinite genus $G$ of dimension $n\geq3$ 
    consists of exactly one spinor genus,
    unless there exists some prime $p$ such that $G\tensor\Z_p$ is $p$-adically diagonalizable,
    with the $p$-power parts of the diagonal terms all being distinct.
    If $G$ is integral, then 
    this exceptional case can only occur if  $p^{\binom{n}{2}}\mid\det G$.
\end{theorem}

\begin{theorem}[Eichler]
  \label{ThmEichler}
    An indefinite spinor genus of dimension${ }\geq3$ consists of exactly one isometry class.
\end{theorem}

\noindent
Note that the integer $\det G$ and the $\Z_p$-lattice $G\tensor\Z_p$ 
are well-defined, by the definition of a genus.
See \cite[Ch.~15, Thm.~19]{SPLAG}, or the proof of the Corollary to Lemma~3.7 in \cite[Ch.~10]{Cassels},
for Theorem~\ref{ThmDistinctPowers}.
See \cite{Eichler} or \cite[Ch.~10, Thm.~1.4]{Cassels} for Theorem~\ref{ThmEichler}.
The restriction to indefinite forms and dimension${}\geq3$ is essential:
in dimension~$2$ the spinor genus behaves very differently than in
higher dimensions, and for definite forms a genus typically contains
many isomorphism classes.

Except in small dimension, lattices with the distinct-powers-of-$p$
property in Theorem~\ref{ThmDistinctPowers} do not seem to occur in nature.  So
these two theorems form the basis for
our statement in the introduction that for indefinite lattices of dimension${ }\geq3$, it
is ``almost'' true that genera coincide with isometry classes.  
Even if a genus (indefinite of rank${ }\geq3$) 
does have the distinct-powers-of-$p$ property, it might still consist of  a 
single isometry class, and one can check this.  
It is just no longer guaranteed.  

This almost-correspondence between genera and isomorphism classes
is the reason that many questions about 
$\Z$-lattices reduce to $\Z_p$-lattices.  
For $p>2$, a $\Z_p$-lattice
has only one isomorphism class of Jordan decomposition.  And 
each Jordan constituent $J$ 
is characterized
by its scale, dimension and sign.  In this case
there is no subtlety to the isometry classification: to determine
whether two $p$-adic lattices are isometric one just finds Jordan
decompositions and compares them.  
So the $p=2$ case accounts for most of
the isometry analysis.  

(For  odd $p$, the sign of~$J$ is defined as
  the Legendre symbol $\legendre{\det J}{p}=\pm1$, always abbreviated to $\pm$.
Although we did not say so in the introduction, when $p=2$ the sign of $J$ is  
Kronecker's generalization $\legendre{\det J}{2}$ of the Legendre symbol.)  

A second common question about a $\Z$-lattice $L$ is whether a given lattice $M$ occurs a direct summand.
When $L$ is the only lattice in its genus, and the signatures of $M$ and $L$ are compatible,
this reduces to the question of whether  $M\tensor\Z_p$ is a summand of $L\tensor\Z_p$ for all primes~$p$.
For $p>2$ this is easy: $M\tensor\Z_p$ is a summand if and only if 
each constituent of $M\tensor\Z_p$  is lower-dimensional 
than the corresponding constituent of $L\tensor\Z_p$,
or else has the same dimension and sign.  The corresponding 
question for $p=2$ is more subtle---see 
Example~\ref{EgOrthocomplement} for a taste of the required analysis.

A third common question is whether $M$ occurs as a
primitive sublattice of~$L$.  Under the same conditions as in the previous paragraph,
this reduces to the
problem of building a suitable candidate for the orthogonal complement of $M\tensor\Z_p$ in $L\tensor\Z_p$,
for each prime $p$.  The case of odd $p$ is no longer trivial, but still the $p=2$ case usually
dominates the analysis.  See \cite{AllcockE10} for an extended calculation of this sort.

\section{Preliminaries}
\label{sec-preliminaries}

\noindent
Now we begin our formal exposition.
Henceforth, an \defn{integer} means an element of the ring
\notation{$\Z_2$} of $2$-adic integers, and we write \notation{$\Q_2$}
for $\Z_2$'s fraction field.  
We assume known that two odd
elements of $\Z_2$ differ by a square factor 
if and only if they are congruent
mod~$8$.  
Every nonzero $x\in\Q_2$ can be written uniquely as $2^au$ with $a\in\Z$
and $u$
a unit of~$\Z_2$.  We call $u$ the
\defn{odd part} of~$x$.

A \defn{lattice} $L$ means a finite-dimensional
free module over $\Z_2$, equipped with a $\Q_2$-valued symmetric
bilinear form.  
We call $L$ \defn{nondegenerate} if
the natural map $L\to\Hom(L,\Q_2)$ is injective.
In this case, the \defn{dual lattice} $L^*=\Hom(L,\Z_2)$
is naturally 
identified with the set of vectors in $L\tensor\Q_2$
that have integral inner products with all elements of~$L$.
We call~$L$
\defn{integral} if $L\sset L^*$, ie if all 
inner products in~$L$ are integers.  
An integral
lattice is called \defn{even} if all its elements have even norm
(self-inner-product), and \defn{odd} otherwise.  

The \defn{determinant} $\det L$ means the determinant of
the inner product matrix of any basis for~$L$, 
and is well-defined up to multiplication
by squares of units of~$\Z_2$.  In particular, 
the odd part of  $\det L$ is well-defined mod~$8$.
If $L$ is integral then
$\det L=[L:L^*]$ up to a unit of~$\Z_2$.
We call $L$ 
\defn{unimodular} if $L=L^*$; this is equivalent to
$L$ being integral with odd determinant.

The \defn{sign} of a unimodular lattice $U$ means
the Kronecker symbol $\legendre{\det U}{2}$.
Recall that this is defined as $+1$ or $-1$ according to whether $\det U\cong\pm1$ or
$\pm3$ mod~$8$.  We will always abbreviate $\pm1$ to~$\pm$.
The Kronecker symbol has special properties that are important
in quadratic reciprocity.  But these play no role in this paper;
for us it is just a way to record information about  odd numbers
mod~$8$.  We only refer to it
as the Kronecker symbol because it already has that name.

Now consider a lattice got by scaling the inner product on a unimodular
lattice.  We say it has \defn{type} $\I$ or $\II$ according to
whether the unimodular lattice is odd or even. 
For example, $\qf{2}$ has type~$\I$, although it is an even lattice, because
it was got by scaling the odd lattice $\qf{1}$.  On the other hand,
$\bigl(\begin{smallmatrix}4&2\\2&4\end{smallmatrix}\bigr)$ has type~$\II$, because it was got by scaling the
even unimodular lattice
$\bigl(\begin{smallmatrix}2&1\\1&2\end{smallmatrix}\bigr)$.  

The last invariant we need is a $\Z/8$-valued invariant of
quadratic spaces over~$\Q$, called the
\defn{$2$-signature} and written $\sigma_2$.
It is  easy to compute.  
A not-quite-invariant similar to the $2$-signature,
called the \defn{oddity}, is
defined below. It is even easier to compute, and is
what is actually used in the Conway-Sloane calculus.
%But it is defined in terms of the $2$-signature.

To compute the $2$-signature of a quadratic space~$V$ over~$\Q_2$,
choose any basis for which the inner product is diagonal.
Then
$\sigma_2(V)\in\Z/8$ is defined as
the sum of the odd parts of 
the diagonal entries, plus~$4$ for each
diagonal entry which is an \defn{antisquare}.
Here an antisquare is defined as a $2$-adic
number of the form $2^{\odd}u$
where $u\cong\pm3$ mod~$8$.
The fact that $\sigma_2(V)$ is independent of the
choice of basis is
surprising.  See \cite[Ch. 15, \S6.1--6.2]{SPLAG} for a proof. 
%The (imperfect) motivation for this language is that such a number
%fails to be a square
%in $\Q_2$ for two separate reasons: 
%neither the $2$-part nor the odd part are
%squares.  (The imperfection is that 
%$-2$ has the same properties but does not
%count as an antisquare.  The name comes from the corresponding construction
%with $2$ replaced by an odd prime. In that case the corresponding 
%property and the definition of  ``antisquare''
%are equivalent.)
Some examples:
\begin{align}
    \sigma_2(\qf{1,3,3,7})
    &{}=
    1+3+3+7\cong4\qquad\hbox{(mod~$8$)}
    \notag
    \\
    \sigma_2(\qf{1,3,3,14})
    &{}=
    1+3+3+7\cong4
    \notag
    \\
    \sigma_2(\qf{1,3,6,7})
    &{}=
    1+3+3+7+4\cong0
    \notag
    \\
    \label{EqEvenUnimodularSigZeroA}
\sigma_2\bigl(
    \begin{smallmatrix}
    0&1
    \\
    1&0
    \end{smallmatrix}
    \bigr)
    =
    \sigma_2(\qf{1,-1})
    &{}=
    1-1\cong0
    \\
    \label{EqEvenUnimodularSigZeroB}
    \sigma_2\bigl(
    \begin{smallmatrix}
    2&1
    \\
    1&2
    \end{smallmatrix}
    \bigr)
    =
    \sigma_2(\qf{2,6})
    &{}=
    1+3+4\cong0
\end{align}
In the last two lines we started with even unimodular
lattices, 
%which
%cannot be diagonalized over~$\Z_2$, 
diagonalized them
over~$\Q_2$, and then computed~$\sigma_2$.
The $2$-signature is additive: 
$\sigma_2(V\oplus V')=\sigma_2(V)\oplus\sigma_2(V')$ for
any quadratic spaces $V,V'$ over~$\Q_2$.

\medskip
Conway and Sloane \emph{define} the ``oddity'' of~$V$ to be just
another name for~$\sigma_2(V)$.  But  
in actual \emph{use}, 
``oddity'' seems 
to refer to the subscripts used in their
calculus, rather than~$\sigma_2$.  
Our own experience is that this shift of language
is very natural, 
since the subscripts are what one actually uses.
It  seems to be their experience too:
their statement of oddity fusion 
\cite[p.~381]{SPLAG} hints at this,
and this is the only sensible interpretation of
``$\ldots$the total oddity of a
compartment must be changed by $4$ mod~$8$, precisely when$\ldots$''
\cite[p.~382]{SPLAG}.  (For experts:
they are discussing sign walking, and sign walking within
a compartment replaces the corresponding set of Jordan constituents
by new Jordan
constituents, without changing the sublattice they span.
Since the ``oddity'' changes under this, they cannot be referring
to $\sigma_2$ of the sublattice.)

We resolve this conflict by defining oddity
 according to actual use.  We only define it for lattices
 got by scaling unimodular lattices, and for lattices which
 are expressed as direct sums of such lattices.
 So it is a function not on lattices but on lattices
 equipped with such a direct sum decomposition.
 We only speak of the oddity of a lattice when it
 is understood which decomposition we mean.
 If $L$ is got by scaling a unimodular lattice $U$ by
 a power of~$2$, then the
 \defn{oddity} $o(L)$ is defined as $\sigma_2(U)$.
 If $L$ is expressed as a direct sum of rescaled
 unimodular lattices, then $o(L)$ is defined as the
 sum of the oddities of the summands.
 By this definition the oddity is additive:
 $o(L\oplus M)=o(L)+o(M)$.

 The oddity is usually the same as the $2$-signature;
 to describe the difference we begin with 
 direct sum decompositions of a $2$-adic lattice:

\begin{lemma}
    \label{LemGramSchmidt}
    Every $2$-adic lattice is a direct sum of
$1$-dimen\-sion\-al lattices and copies of the lattices
$\bigl(\begin{smallmatrix}0&1\\1&0\end{smallmatrix}\bigr)$ and
  $\bigl(\begin{smallmatrix}2&1\\1&2\end{smallmatrix}\bigr)$, scaled by
    powers of~$2$.
\end{lemma}

\begin{proof}
    After pulling off some summands $\qf{0}$ it is enough to
    treat the nondegenerate case.
    We use induction on dimension and the fact that any sublattice
    with odd determinant is a summand.  
    Suppose a lattice $L$ is given; by scaling we may suppose
    it is integral with some inner product odd.  A vector of
    odd norm  spans a summand,  and then we can appeal to induction.
    So suppose $L$ is even, and choose two elements with odd
    inner product.  
    Their inner product matrix
    $\bigl(\begin{smallmatrix}
    \even&\odd
    \\\odd&\even
    \end{smallmatrix}\bigr)$
    has determinant $(\mathrm{even})(\mathrm{even})-(\mathrm{odd})^2\cong3$ mod~$4$.  
    Therefore they span a summand and we 
    can apply induction to the complementary summand.

    All that remains to prove is: every $2$-dimensional even
    unimodular lattice~$U$ is isomorphic to 
    $
    \bigl(\begin{smallmatrix}
    0&1
    \\
    1&0
    \end{smallmatrix}\bigr)$ 
    or
    $\bigl(\begin{smallmatrix}
    2&1
    \\
    1&2
    \end{smallmatrix}\bigr)$.
    If $U\tensor\Q_2$ has an isotropic vector then 
    so does~$U$.  Choosing
    a primitive one
    as the first basis vector and using row and column operations
    proves
    $
    U\iso
    \bigl(\begin{smallmatrix}
    0&1
    \\
    1&0
    \end{smallmatrix}\bigr)$.
    This argument applies in particular if $\det U$ is
    (in the square class of) $-1$.  This is because an orthogonal
    basis for $U\tensor\Q_2$
    has isotropic inner product matrix
    $\bigl(\begin{smallmatrix}
    t&0
    \\
        0&-t\cdot\mathrm{square}
    \end{smallmatrix}\bigr)
    \iso
    \bigl(\begin{smallmatrix}
    t&0
    \\
        0&-t
    \end{smallmatrix}\bigr)$.

    Now suppose $U$ is anisotropic.  We have seen $\det U\cong3$ mod~$8$.
    Therefore $U$ has an inner product matrix of the form
    $\bigl(\begin{smallmatrix}
    2u&1
    \\
    1&2v
    \end{smallmatrix}\bigr)$
    where $u,v$ are units.  In fact
    $\det U\cong3$ mod~$8$ forces $u\cong v$ mod~$8$.  The vectors
    $(1,1)$, $(2,1)$ and $(3,1)$ have norms${}\cong 6u$, $-2u$ and $-6u$
    mod~$16$.  Therefore $U$ has a vector of norm${}\cong 2$ mod~$16$.
    Rescaling it gives a norm~$2$ lattice
    vector.  By rescaling a supplementary
    basis vector we may suppose the determinant is exactly~$3$.  
    Then a row/column
    operation lets us take the off-diagonal terms to be~$1$, after
    which
    $\det U=3$ forces 
    $U\iso\bigl(\begin{smallmatrix}
    2&1
    \\
    1&2
    \end{smallmatrix}\bigr)$.
    %(See \cite[p.~117]{Cassels} or
    %\S4.4 of \cite[Ch.\ 15]{SPLAG}.) 
\end{proof}

Suppose $L$ is a unimodular lattice decomposed as a direct sum
corresponding to a diagonalization
$\qf{d_1,\dots,d_n}$, with the $d_i$ units in~$\Z_2$.
By definition, both $L$'s oddity and $2$-signature
are $d_1+\cdots+d_n$ (mod~$8$).
Every even unimodular lattice is a sum of
copies of summands
$\bigl(\begin{smallmatrix}
0&1
\\
1&0\end{smallmatrix}
\bigr)$ and
$
\bigl(\begin{smallmatrix}
2&1
\\
1&2
\end{smallmatrix}\bigr)$.
Our calculations 
\eqref{EqEvenUnimodularSigZeroA}--\eqref{EqEvenUnimodularSigZeroB} 
shows that such a lattice has
$2$-signature~$0$, hence also oddity~$0$.

Scaling a unimodular lattice
by a power of~$2$ might change the $2$-signature,
by introducing or eliminating antisquares.
But it leaves the oddity alone.
For example, scaling $\qf{3}$ by~$2$ to get $\qf{6}$ changes
the $2$-signature from $3$ to~$7$ but leaves the oddity
equal to~$3$.
The general rule is:
any direct sum decomposition as in
Lemma~\ref{LemGramSchmidt} has oddity equal to
the sum (mod~$8$) of the odd parts of the 
$1$-dimensional terms.
We will discuss oddity further when we introduce
fine decompositions in section~\ref{sec-fine-symbols} and Jordan
decompositions in section~\ref{sec-Jordan-symbols}.

\medskip
For unimodular lattices, the dimension, sign, type and 
oddity turn out to be a complete set
of invariants.  We prove this in
Theorem~\ref{thm-unimodular-lattices}.  Conway and Sloane express the
isometry class of a unimodular lattice as
$1^{\pm n}_t$ where $\pm$ is the sign, $n$ is the dimension and $t$ is either
the formal symbol $\II$ (for even lattices)
or the oddity (for odd lattices).
In particular, the subscript implicitly records the parity of
the lattice.
We just saw that all even unimodular
lattices have oddity~$0$, so in this case there is no point 
recording it.

If $q$ is a power of~$2$ then 
(after Theorem~\ref{thm-unimodular-lattices})
we will write $q^{\pm n}_\II$ or $q^{\pm
  n}_t$ for the lattice got from $1^{\pm n}_\II$ or $1^{\pm n}_t$ by
rescaling all inner products by~$q$.  For example, $2^{-2}_\II$ has
inner product matrix
$\bigl(\begin{smallmatrix}4&2
\\2&4\end{smallmatrix}\bigr)$.  
The subscript records 
whether the lattice has type~$\I$ or~$\II$, which we recall
is the scale-invariant
generalization of the oddness/evenness of unimodular lattices.
The
number~$q$ is called the \defn{scale} of the symbol (or lattice).
Although one quickly learns the rules,
the following table lets one read off the 
oddity and $2$-signature of any sum of scaled unimodular lattices:
\begin{equation}
    \label{EqTableOfOddities}
    \begin{matrix}
        L
        \lower3pt\hbox{\strut}
        & 1^{\cdots}_t,2^{+\cdots}_t 
        & 2^{-\cdots}_t 
        & 1^{\cdots}_{\II},2^{\cdots}_{\II}
        \\
        o(L)
        & t & t & 0
        &\rlap{additive, and $o(2L)=o(L)$}
        \phantom{\hbox{additive, and $\sigma_2(4L)=\sigma_2(L)$}}
        \\
        \sigma_2(L)
        & t & t+4 & 0
        &\hbox{additive, and $\sigma_2(4L)=\sigma_2(L)$}
    \end{matrix}
\end{equation}

Except for special cases, 
we will not use this notation until we 
have classified the unimodular lattices 
in Theorem~\ref{thm-unimodular-lattices}.
The special cases are in dimension~$1$ and 
the even case in dimension~$2$:
for $q$ any power of~$2$ we define
\begin{equation*}
    \begin{matrix}
        \lower3pt\strut
        &q^{+1}_1&q^{+1}_{-1}&q^{-1}_3&q^{-1}_{-3}&
        q^{+2}_\II&q^{-2}_\II
        \\
        \phantom{\textrm{with oddity}}\llap{\textrm{as}}
        &
        \qf{q}&\qf{-q}&\qf{3q}&\qf{-3q}&
        \bigl(\begin{smallmatrix}0&q
            \\
        q&0\end{smallmatrix}\bigr) 
        &
\bigl(\begin{smallmatrix}2q&q
\\q&2q\end{smallmatrix}\bigr)
    \\
    \raise2pt\strut\textrm{with oddity}
        &
    1&-1&3&-3&0&0
    \end{matrix}
\end{equation*}
These definitions are
compatible with the more general notation.
We will usually omit the
symbol $\oplus$ from direct sums, for example
writing $1^{+1}_{-1}\,1^{-1}_3\,4^{+2}_\II$ for $1^{+1}_{-1}\oplus1^{-1}_{3}\oplus4^{+2}_{\II}$.  
To lighten the notation one
usually suppresses plus signs in superscripts, for example
$1^1_{-1}\,1^{-1}_3\,4^2_\II$, and/or suppresses the dimensions when they are~$1$, for
example $1^+_{-1}\,1^-_3\,4^{2}_\II$.
One could suppress even more, such as leaving the subscript blank for summands of type~$\II$.  But excessive abbreviation is more error-prone than helpful.

\section{Fine symbols}
\label{sec-fine-symbols}

\noindent
In this section we work with a finer decomposition of a lattice than the usual
Jordan decomposition.  The goal is to establish that certain ``moves'' 
between such decompositions do not change the isometry class of the lattice.
This will make the corresponding facts for Jordan decompositions
in the next section easy to state and prove.
Theorem~\ref{thm-equivalence-of-fine-symbols}, proven in section~\ref{sec-equivalences-between-fine-decompositions}, captures the full classification
of $2$-adic lattices, but in a very clumsy way.  The rest
of this paper recasts this classification into a simpler
form.

By a \defn{fine decomposition} of a lattice $L$ we mean a direct sum
decomposition in which each summand (or \defn{term}) is one of
$q^{1}_{\pm1}$, $q^{-1}_{\pm3}$ or $q^{\pm2}_\II$, with the last case
only occurring if every term of that scale has type~$\II$.
The name reflects the fact that no
further decomposition of the summands is possible.
By \eqref{EqTableOfOddities}, 
the oddity of (this decomposition of) $L$ can be read off as
the sum mod~$8$ of the numerical subscripts.
And the $2$-signature of~$L$ can be got from that by adding~$4$
for each term $q^{-\cdots}_{\cdots}$ with $q=2^{\odd}$.
A fine decomposition always exists, by starting
with a decomposition as a sum of $q^{\pm1}_t$'s and $q^{\pm2}_\II$'s
(Lemma~\ref{LemGramSchmidt})
and applying the next lemma repeatedly.

\begin{lemma}
\label{lem-odd-unimodular-admits-orthogonal-basis}
If $\e,\e'$ are signs then 
$1^{\e1}_t\,1^{\e'2}_\II$ admits an orthogonal basis.
\end{lemma}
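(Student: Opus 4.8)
The plan is to diagonalize $L:=1^{\e1}_t\,1^{\e'2}_\II$ explicitly by choosing a single ``mixing'' vector of unit norm, splitting it off as an orthogonal summand, and then finishing with one step of Gram--Schmidt on the surviving rank-$2$ block. I would write the summand $1^{\e'2}_\II$ on a basis $f,g$ so that its Gram matrix is $\bigl(\begin{smallmatrix}2s&1\\1&2s\end{smallmatrix}\bigr)$, where $s=0$ gives $\hypplane=1^{+2}_\II$ and $s=1$ gives $1^{-2}_\II$; and I would let $e_1$ span $1^{\e1}_t=\qf{t}$, so that $e_1\cdot e_1=t\in\Z_2^\times$ and $e_1\perp f,g$. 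Note that $t$ is an odd unit, and the argument will not otherwise depend on $\e$ or $\e'$.

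First I would point out why the obvious move fails: although $e_1$ already has unit norm, splitting off the rank-$1$ lattice it spans leaves exactly the even block $1^{\e'2}_\II$, which has no orthogonal basis. So the whole point is to pick a unit-norm vector that ``uses up'' the oddness inside the even block. I would set $v:=e_1+f$, so that $v\cdot v=t+2s$ is odd, hence a unit. Because $v\cdot v\in\Z_2^\times$, the vector $v$ spans a unimodular rank-$1$ sublattice, which therefore splits off orthogonally by the standard Gram--Schmidt mechanism: $L=\qf{v}\oplus v^\perp$ with $v^\perp$ unimodular of rank~$2$.

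Next I would compute $v^\perp$ directly. Solving $(\alpha e_1+\beta f+\gamma g)\cdot v=\alpha t+2s\beta+\gamma=0$ shows that $v^\perp$ is spanned by $w_1:=e_1-tg$ and $w_2:=f-2sg$. The change-of-basis matrix from $e_1,f,g$ to $v,w_1,w_2$ has determinant $t+2s$, a unit, so these three vectors are a $\Z_2$-basis of $L$. The crucial check is that $w_1\cdot w_1=t+2st^2=t(1+2st)$ is again an odd unit, so $v^\perp$ is an \emph{odd} rank-$2$ unimodular lattice rather than an even one. Since $w_1$ has unit norm, one final Gram--Schmidt step, subtracting the $\Z_2$-multiple $(w_1\cdot w_2)/(w_1\cdot w_1)$ of $w_1$ from $w_2$ (legitimate because $w_1\cdot w_1$ is a unit), replaces $w_2$ by a vector orthogonal to $w_1$ and diagonalizes $v^\perp$. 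Then $v$, $w_1$, and the corrected $w_2$ form an orthogonal basis of $L$.

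I expect the main obstacle to be nothing computational but rather the verification that $v^\perp$ is odd: the naive splitting above shows that oddness need not descend to an orthogonal complement, so the entire force of the lemma lies in choosing $v$ so that $w_1\cdot w_1$ stays a unit. Everything else is just the routine ``an odd unimodular lattice is diagonalizable'' procedure carried out in rank~$3$.
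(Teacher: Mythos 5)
Your proof is correct: $v=e_1+f$ does have odd norm $t+2s$, so $\qf{v}$ splits off orthogonally; your computation of $v^\perp=\spanof{w_1,w_2}$ is right; and the verification that $w_1\cdot w_1=t(1+2st)$ is a unit is exactly the point that needs checking, since (as your own "naive splitting" remark shows) the orthogonal complement of an odd-norm vector in this lattice can perfectly well be even for a bad choice of vector. The underlying idea is the same as the paper's --- an orthogonal basis must consist of vectors straddling both summands, because any vector lying in one summand leaves the even block intact --- but the execution differs. The paper takes all three straddling classes of $(M/2M)\oplus(N/2N)$ at once, lifts them, observes that their Gram matrix is the identity mod~$2$, and diagonalizes by row and column operations; you instead peel off a single straddling vector and compute its complement explicitly. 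The paper's version is more symmetric and avoids any explicit complement computation, while yours makes visible exactly where the oddness of the complement comes from; both are complete, elementary, and of comparable length.
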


\begin{proof}
Write $M$ and $N$ for the two summands and consider
the three elements of $(M/2M)\oplus(N/2N)$ that lie in neither $M/2M$
nor $N/2N$.  Any lifts of them have odd norms and even inner
products.  Applying row and column operations to their inner product
matrix leads to a diagonal matrix with odd diagonal entries.
\end{proof}

In order to discuss the relation between distinct fine decompositions
of a given lattice, we introduce the following special language for
$1$-dimensional lattices only.  We call $q^{+1}_1$ and $q^{-1}_{-3}$
``givers'' and $q^{+1}_{-1}$ and $q^{-1}_3$ ``receivers''.  
(Type~$\II$ lattices are neither givers nor receivers.)
The idea
is that a giver can give away two oddity and remain a meaningful symbol
($q^+_1\to q^+_{-1}$ or $q^-_{-3}\to q^-_3$), while a receiver can accept
two oddity.  We often use a subscript $R$ or $G$ in place of the
oddity, so that $1^+_G$ and $1^-_G$ mean $1^+_1$ and $1^-_{-3}$, while
$1^+_R$ and $1^-_R$ mean $1^+_{-1}$ and $1^-_3$.  Scaling inner
products by $-3$
negates signs and preserves giver/receiver status, while scaling them by
$-1$ preserves signs and reverses giver/receiver status.

A \defn{fine symbol} means a sequence of
symbols $q^{\pm2}_\II$ and $q^{\pm}_{\RorG}$.  We replace $R$ and $G$ by numerical subscripts
whenever convenient, and 
regard two symbols as the same if they differ by permuting
terms.  Two scales are called {\it adjacent} if they differ by a factor of~$2$.

\begin{lemma}[Sign walking]
\label{lem-sign-walking-for-fine-symbols}
Consider a fine symbol and two terms of it that satisfy one of the
following conditions:
\begin{enumerate}
\setcounter{enumi}{-1}
\item
\label{item-sign-walking-same-scale}
they have the same scale;
\item
\label{item-sign-walking-adjacent-scales-I-and-II}
they have adjacent scales and different types;
\item
\label{item-sign-walking-adjacent-scales-both-I}
they have adjacent scales and are both givers or both receivers;
\item
\label{item-sign-walking-scales-differ-by-factor-of-4}
their scales differ by a factor of~$4$ and they both have type~$\I$.
\end{enumerate}
Consider as well the fine symbol got by negating the signs of these two
    terms,
and  in case
\eqref{item-sign-walking-adjacent-scales-both-I}
also changing both from givers to receivers or vice-versa.  Then
the two fine symbols represent  isometric lattices.
\end{lemma}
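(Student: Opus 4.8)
The plan is to reduce the assertion to a finite list of explicit low-rank isometries. Since the two fine symbols differ only in the two named terms, and a fine symbol represents the orthogonal direct sum of its terms, it suffices to prove that the sublattice spanned by the two named terms is isometric to the sublattice obtained after the sign (and, in case~\eqref{item-sign-walking-adjacent-scales-both-I}, giver/receiver) changes. Scaling all inner products by a power of~$2$ is functorial for isometry and for direct sums, so we may rescale to make the smaller of the two scales equal to~$1$. This leaves, in each of the four cases, only finitely many configurations, all of rank at most~$4$: in case~\eqref{item-sign-walking-same-scale} the two terms have scale~$1$ and, since a fine decomposition does not mix types at a single scale, are either both type~$\I$ of total rank~$2$ or both type~$\II$ of rank~$4$; in case~\eqref{item-sign-walking-adjacent-scales-I-and-II} they have scales~$1$ and~$2$ with one term of each type, giving rank~$3$; and in cases~\eqref{item-sign-walking-adjacent-scales-both-I} and~\eqref{item-sign-walking-scales-differ-by-factor-of-4} they are two rank-$1$ type~$\I$ terms, of rank~$2$.

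Before checking these I would cut down the number of configurations using the two unit-scalings discussed above the lemma. Scaling the form by $-3$ negates every sign, and scaling by $-1$ reverses every giver/receiver label; both preserve scales and types, so each carries fine symbols to fine symbols and each of the four cases to itself. Because scaling by a unit is functorial for isometry, these operations act on the set of instances of the lemma, and it suffices to verify one representative per orbit. For example, in case~\eqref{item-sign-walking-same-scale} the $(\mathrm{G},\mathrm{G})$, $(\mathrm{G},\mathrm{R})$ and $(\mathrm{R},\mathrm{R})$ configurations are identified in pairs, and the sign of a chosen representative may also be fixed.

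For each surviving representative I would first confirm, as a consistency check, that the two symbols have the same oddity over $\Q_2$ (summing the odd parts of the diagonalized entries, with the $+4$ for each antisquare), which is a necessary condition for isometry; one finds that the $\pm4$ and $\pm2$ shifts of the subscripts in the four cases always cancel in the total. The substantive step is then to exhibit an explicit integral isometry by a change of basis, equivalently by row-and-column operations on the Gram matrix. Typical base cases are $\qf1\oplus\qf1\iso\qf{-3}\oplus\qf{-3}$ (case~\eqref{item-sign-walking-same-scale}, type~$\I$), $\hypplane\oplus\hypplane\iso\bigl(\begin{smallmatrix}2&1\\1&2\end{smallmatrix}\bigr)\oplus\bigl(\begin{smallmatrix}2&1\\1&2\end{smallmatrix}\bigr)$ (case~\eqref{item-sign-walking-same-scale}, type~$\II$), $\qf1\oplus\qf2\iso\qf3\oplus\qf6$ (case~\eqref{item-sign-walking-adjacent-scales-both-I}), and $\qf1\oplus\qf4\iso\qf{-3}\oplus\qf{-12}$ (case~\eqref{item-sign-walking-scales-differ-by-factor-of-4}); in the rank-$2$ cases each is produced by finding a suitable primitive vector of the required norm and completing it to a basis.

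The main obstacle I anticipate is case~\eqref{item-sign-walking-adjacent-scales-I-and-II}: here the relevant sublattice has rank~$3$ and mixes a rank-$1$ type~$\I$ term with a rank-$2$ type~$\II$ term at adjacent scales, for example $\qf1\oplus 2^{+2}_\II\iso\qf{-3}\oplus 2^{-2}_\II$, and producing the explicit integral change of basis is more delicate than in the rank-$2$ cases, because the type~$\II$ block must be preserved while the type~$\I$ diagonal entry is altered. The rank-$4$ type~$\II$ identity of case~\eqref{item-sign-walking-same-scale} is the only other computation of rank above~$2$, but it is the classical isometry between the two even unimodular forms of rank~$4$ over $\Z_2$ and can be written down directly.
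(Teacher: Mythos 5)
Your proposal is correct and follows essentially the same route as the paper: reduce to the sublattice spanned by the two named terms, rescale so the smaller scale is~$1$, use the unit scalings by $-1$ and $-3$ to cut down the configurations, and verify the remaining explicit rank-${}\leq4$ isometries by exhibiting a primitive vector of the required odd (or required) norm and comparing determinants. The one difficulty you flag, case~\eqref{item-sign-walking-adjacent-scales-I-and-II}, is resolved in the paper by writing the type~$\II$ block with Gram matrix $\bigl(\begin{smallmatrix}2&1\\1&0\,{\rm or}\,2\end{smallmatrix}\bigr)$ and adding the generator of the adjacent type~$\I$ term to the second basis vector, which toggles the determinant between $-1$ and $3$ mod~$8$; the sub-case with the type~$\II$ block at the larger scale then follows by passing to dual lattices and rescaling by~$2$.
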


An alternate name for
\eqref{item-sign-walking-scales-differ-by-factor-of-4} might be
sign jumping.  Conway and Sloane informally describe 
it as a
composition of two sign walks of type
\eqref{item-sign-walking-adjacent-scales-I-and-II}. For example,
$$
1^1_12_\II^{+0}4^1_1
\to
1^{-1}_{-3}2_\II^{-0}4^1_1
\to
1^{-1}_{-3}2_\II^{+0}4^{-1}_{-3}.
$$ 
They also observe that this doesn't really make sense: $2_\II^{-0}$ is
illegal because the $0$-dimensional lattice has determinant~$1$, hence sign~$+$.

\begin{proof}
It suffices to prove the following isometries, where $\e,\e'$ are
signs, $X$ represents $R$ or $G$, and $X'$ represents $R$ or $G$:

\eqref{item-sign-walking-same-scale}
$1^{\e2}_\II \, 1^{\e'2}_\II \iso 1^{-\e2}_\II \, 1^{-\e'2}_\II$
and
$1^{\e}_X \, 1^{\e'}_{X'} \iso 1^{-\e}_X \, 1^{-\e'}_{X'}$

\eqref{item-sign-walking-adjacent-scales-I-and-II}
$1^{\e2}_\II \, 2^{\e'}_{X'} \iso 1^{-\e2}_\II \, 2^{-\e'}_{X'}$
and
$1^{\e'}_{X'} \, 2^{\e2}_\II \iso 1^{-\e'}_{X'} \, 2^{-\e2}_\II$

\eqref{item-sign-walking-adjacent-scales-both-I}
$1^{\e}_G \, 2^{\e'}_G \iso 1^{-\e}_R \, 2^{-\e'}_R$

\eqref{item-sign-walking-scales-differ-by-factor-of-4}
$1^{\e}_X \, 4^{\e'}_{X'} \iso 1^{-\e}_X \, 4^{-\e'}_{X'}$

\noindent
The first part of \eqref{item-sign-walking-same-scale} is
trivial except for the assertion $1^{+2}_\II \, 1^{+2}_\II\iso
1^{-2}_\II \, 1^{-2}_\II$.  Choose a
norm~$4$ vector $x$ of the right side.
    Then choose $y$
to have inner product~$1$ with~$x$.  The span of $x$ and $y$ is even
of determinant${}\cong-1$ mod~$8$, so it is a copy of $1^{+2}_\II$.  Its orthogonal
complement must also be even unimodular, hence one of $1^{\pm2}_\II$,
hence $1^{+2}_\II$ by considering the determinant.

The second part of \eqref{item-sign-walking-same-scale} is best
understood using numerical subscripts: we must show
$1^{\e}_t\,1^{\e'}_{t'}\iso1^{-\e}_{t+4}\,1^{-\e'}_{t'+4}$, i.e., $\qf{t,t'}\iso\qf{t+4,t'+4}$.
To see this, note that the left side
represents $t+4t'\cong t+4$ mod~$8$, that this is odd and therefore
corresponds to some direct summand, and the determinants of the two
sides are equal.  Note that givers and receivers always have oddities
congruent to $1$ and $-1$ mod~$4$ respectively, so changing a numerical subscript
by~$4$ doesn't alter giver/receiver status.  
Furthermore, the sign on $1^\e_t$ changes since exactly
one of $t,t+4$ lies in $\{\pm1\}$ and the other in $\{\pm3\}$, and 
similarly for $1^{\e'}_{t'}$.
The same argument works
for 
\eqref{item-sign-walking-scales-differ-by-factor-of-4}, in the form
$1^{\e}_t\,4^{\e'}_{t'}\iso1^{-\e}_{t+4}\,4^{-\e'}_{t'+4}$.

For the first part of
\eqref{item-sign-walking-adjacent-scales-I-and-II} we choose a basis
for $1^{\e2}_\II$ with inner product matrix
$\bigl(\begin{smallmatrix}2&1\\1&0\,{\rm
    or}\,2\end{smallmatrix}\bigr)$ where the lower right corner
  depends on $\e$.  Replacing the second basis vector by its sum with
  a generator of $2^{\e'}_{X'}$ changes the lower right corner by $2$
  mod~$4$.  This toggles the $2\times2$ determinant between $-1$ and
  $3$ mod~$8$.  Therefore it gives an even unimodular summand of
  determinant $-3$ times that of $1^{\e2}_\II$, hence of   sign
  $-\e$.
  Since the overall determinant is an invariant, 
  the determinant of its complement is therefore $-3$ times that of
  $2^{\e'2}_{X'}$.  So the complement is got from $2^{\e'2}_{X'}$ by
  scaling by~$-3$.  We observed above that scaling by $-3$ negates the
  sign and preserves giver/receiver status, so the complement is $2^{-\e'2}_{X'}$.  The
  second part of \eqref{item-sign-walking-adjacent-scales-I-and-II}
  follows from the first by passing to dual lattices and then scaling
  inner products by~$2$. (It is easy to see that the dual lattice has
  the same symbol with each scale replaced by its reciprocal.)

\eqref{item-sign-walking-adjacent-scales-both-I} After
rescaling by~$-3$ if
necessary to take $\e=+$, it suffices to prove
$1^+_G\,2^{\e'}_G\iso1^-_R\,2^{-\e'}_R$, i.e., $\qf{1,2}\iso\qf{3,6}$
and $\qf{1,-6}\iso\qf{3,-2}$. 
In each case one finds a vector on the left side whose norm is odd and appears on the
right, and then compares determinants.
\end{proof}

Further equivalences between fine symbols are phrased in terms of
``compartments''.  A \defn{compartment} means a set of type~$\I$
terms, the set of whose scales forms a sequence of consecutive
powers of~$2$, and which is maximal with these properties.  For
example in $1^2_\II\, 2^-_G\, 2^-_R\, 4^+_G\, 16^-_R$,
the set of scales that have type~$\I$ are $\{2,4,16\}$.  These fall into
two strings of consecutive powers of~$2$, namely $\{2,4\}$ and $\{16\}$.  So
there are two  compartments: $2^-_G\, 2^-_R\, 4^+_G$ and 
$16^-_R$.

\begin{lemma}[Giver permutation and conversion]
\label{lem-giver-permutation-and-conversion}
Consider a fine symbol and the
symbol obtained by one of the following operations.
Then the lattices they represent are isometric
    and have the same oddity.
\begin{enumerate}
\item
\label{item-giver-permutation} Permute
        the subscripts $G$ and $R$ within a
        single
compartment.
\item
\label{item-giver-conversion} Convert any four $G$'s in a
compartment 
to $R$'s,
or vice versa.
\end{enumerate}
\end{lemma}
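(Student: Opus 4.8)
The plan is to build the isometries explicitly, reducing \eqref{item-giver-permutation} and \eqref{item-giver-conversion} to Lemma~\ref{lem-sign-walking-for-fine-symbols} and to the unimodular classification of Theorem~\ref{thm-unimodular-lattices}, and exploiting the fact that a compartment's scales form an unbroken run of powers of~$2$. As a sanity check one records the dictionary $q^{+}_G,\,q^{+}_R,\,q^{-}_G,\,q^{-}_R\leftrightarrow\qf{qu}$ with $u\cong 1,-1,-3,3$ respectively: a single conversion $G\to R$ multiplies $u$ by $-1$ and lowers the oddity by~$2$. Since \eqref{item-giver-permutation} performs one $G\to R$ and one $R\to G$, while \eqref{item-giver-conversion} performs four conversions of the same kind, each operation negates an even number of units and changes the oddity by a multiple of~$8$; hence it preserves dimension, the determinant class in $\sq{\Q_2}$, and the oddity. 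Lacking a classification of scaled lattices this is not yet enough, so the isometries must be produced by hand.

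For \eqref{item-giver-permutation} it suffices to treat an \emph{elementary swap}: interchange the labels $G$ and $R$ of one giver and one receiver, leaving all signs and scales fixed. Because the scales of a compartment are consecutive, the label-bearing slots are linearly ordered by scale, and any relabelling with the same number of $G$'s is a product of elementary swaps between terms of equal or adjacent scale. If the two terms share a scale, the two symbols differ only in them, and their rank-$2$ direct sum, rescaled to be unimodular, has the same dimension, sign and oddity on both sides, hence is isometric by Theorem~\ref{thm-unimodular-lattices}. For \emph{adjacent} scales I would prove $q^{\e}_G\,(2q)^{\e'}_R\iso q^{\e}_R\,(2q)^{\e'}_G$ by the method of Lemma~\ref{lem-sign-walking-for-fine-symbols}: pick a primitive vector of odd norm represented on both sides, split off the rank-$1$ summand it spans, and identify the orthogonal complement by comparing determinants.

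For \eqref{item-giver-conversion} the idea is to manufacture the four conversions from sign-walks. Two applications of Lemma~\ref{lem-sign-walking-for-fine-symbols}\eqref{item-sign-walking-adjacent-scales-both-I}, to two pairs of adjacent givers, turn all four givers into receivers while flipping the four signs; one then restores those signs by two applications of Lemma~\ref{lem-sign-walking-for-fine-symbols}\eqref{item-sign-walking-same-scale} and~\eqref{item-sign-walking-scales-differ-by-factor-of-4}, which flip a pair of signs without touching giver/receiver status. Before starting one uses part~\eqref{item-giver-permutation}, now available, to slide the four givers onto scales for which this closes up: the conversion step wants each pair of givers at adjacent scales, and the restoration step wants each pair of the resulting receivers at equal scales or at scales in ratio~$4$. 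A one-scale compartment, where \eqref{item-sign-walking-adjacent-scales-both-I} does not apply, is handled instead like the equal-scale case above, by rescaling to a rank-$4$ unimodular lattice and quoting Theorem~\ref{thm-unimodular-lattices}.

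The main obstacle is exactly this double bookkeeping in~\eqref{item-giver-conversion}: one must arrange the four giver-scales so that they group simultaneously into adjacent pairs for the conversion and, afterwards, into equal-or-ratio-$4$ pairs for the sign restoration. I would settle it by a short induction on the number of distinct scales occupied by the givers, at each stage using part~\eqref{item-giver-permutation} to move a single giver into a scale that completes an admissible pair. This is always possible precisely because a compartment's scales are an unbroken block of powers of~$2$, so a giver is never stranded at a scale lacking an available partner.
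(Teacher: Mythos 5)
There are two genuine problems. First, your appeals to Theorem~\ref{thm-unimodular-lattices} (for the equal-scale swap in part~\eqref{item-giver-permutation} and for the single-scale case of part~\eqref{item-giver-conversion}) are circular in this paper's logical order: the proof of Theorem~\ref{thm-unimodular-lattices} itself invokes giver permutation and giver conversion to match up the fine symbols of two unimodular lattices, so the classification of unimodular lattices by dimension, type, sign and oddity is not yet available here. Both places need direct constructions. The equal-scale swap reduces to isometries like $\qf{1,3}\iso\qf{-1,-3}$, proved by exhibiting a vector of odd norm represented on both sides and comparing determinants; the single-scale conversion reduces (after sign walking to concentrate the minus signs) to the two explicit isometries $\qf{1,1,1,1}\iso\qf{-1,-1,-1,-1}$ and $\qf{-3,1,1,1}\iso\qf{3,-1,-1,-1}$, which the paper establishes by writing down bases.

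Second, your scheme for part~\eqref{item-giver-conversion} in the multi-scale case does not close up. You want two \emph{disjoint} adjacent-scale pairs of givers so that both conversions can be done by Lemma~\ref{lem-sign-walking-for-fine-symbols}\eqref{item-sign-walking-adjacent-scales-both-I}, and then you want the four sign-flipped terms to pair into equal-scale or ratio-$4$ pairs for the restoration. Already the first requirement can fail: in the compartment $1^+_G\,1^+_G\,1^+_G\,2^+_G$ there is only one term of scale~$2$, so only one adjacent-scale pair exists and no amount of giver permutation (which moves subscripts, not terms) creates a second one. The paper's resolution is different and simpler: fix \emph{one} pair of terms of adjacent scales, permute the first two $G$'s onto them, convert by sign walking (negating those two signs), then permute the resulting $R$'s away and the remaining two $G$'s onto the \emph{same} two terms, and sign-walk again. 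Because both walks act on the same pair of terms, the second walk automatically undoes the sign change of the first, and no separate sign-restoration step (and hence none of your pairing bookkeeping) is needed. Your induction on the number of occupied scales, as stated, does not supply this idea and the asserted ``never stranded'' claim is false in the example above.
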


\begin{proof}
Giver permutation, meaning operation \eqref{item-giver-permutation}, can be achieved by repeated use 
of the 
isomorphisms
$
1^\e_G \, 1^{\e'}_R \iso 1^\e_R \, 1^{\e'}_G
$
and
$
1^\e_G \, 2^{\e'}_R \iso 1^\e_R \, 2^{\e'}_G
$
(scaled up or down as necessary).
To establish these we first rescale by $-3$ if
necessary, to take $\e=+$ without loss of generality.  This leaves the cases
$\qf{1,-1}\iso\qf{-1,1}$,   
$\qf{1,3}\iso\qf{-1,-3}$, 
$\qf{1,-2}\iso\qf{-1,2}$ and $\qf{1,6}\iso\qf{-1,10}$.  One proves
each by finding a vector on the 
left whose norm is odd and appears on the
right, and then comparing determinants.
To see the invariance of the oddity,
%    :
% rather than exchanging one
%subscript $G$ with another one~$R$,
imagine
    %changing the $G$ 
%to~$R$ (reducing the oddity by~$2$) and the $R$ to~$G$
%(increasing the oddity by~$2$).
the giver giving $2$ oddity to the receiver.
This converts the giver to a receiver and vice-versa.

For giver conversion, meaning operation \eqref{item-giver-conversion},
the oddity is invariant by the same argument as for
giver permutation.  It remains to prove the isomorphism of the
    lattices.
We assume first that more than one scale
is present in the compartment, so we can
choose terms of adjacent scales.  Assuming four $G$'s are
present in the compartment, we permute a pair of them to our chosen terms, then use sign walking to convert 
these terms  to
receivers.  This negates both signs.  Then we permute these $R$'s away,
replacing them by the second pair of $G$'s, and repeat the sign walking.  This converts
the second pair of $G$'s to $R$'s and restores the original signs.

For the case that only a single scale is present we first treat what
will be the essential cases, namely
\begin{equation*}
  1^+_G\,1^+_G\,1^+_G\,1^+_G\iso1^+_R\,1^+_R\,1^+_R\,1^+_R
  \hbox{\ \ and\ \ }
1^-_G\,1^+_G\,\discretionary{}{}{}1^+_G\,1^+_G\iso1^-_R\,1^+_R\,1^+_R\,1^+_R
\end{equation*}  That is,
\begin{equation*}
  \qf{1,1,1,1}\iso\qf{-1,-1,-1,-1}
  \hbox{\ \ and\ \ }
\qf{-3,1,1,1}\iso\qf{3,-1,-1,-1}
\end{equation*}
In the first case we exhibit a suitable
basis for the left side, namely 
$(2,1,1,1)$ and the images of $(-1,2,1,-1)$ under cyclic permutation
of the last~$3$ coordinates.  In the second we note that the left side
is the orthogonal sum of the span of $(1,0,0,0)$ and $(0,1,1,1)$,
which is a copy of $\qf{-3,3}$, and the span of $(0,-1,1,0)$ and
$(0,0,1,-1)$, which is a
copy of $1^{-2}_\II$.  Since each of these is isometric to
its scaling by~$-1$, so is their direct sum.

Now we treat the general case when only a single scale is present.
Suppose there are at least~$4$ givers.  By scaling by a power of~$2$
it suffices to treat the unimodular case.  By sign walking we may
change the signs on any even number of them, so we may suppose at most
one~$-$ is present.  (Recall that sign walking between terms of the
same scale doesn't affect subscripts $G$ or~$R$.)  By the previous paragraph we
may convert four $G$'s to $R$'s.  Then we reverse the sign walking
operations to restore the original signs.
\end{proof}

The following theorem captures the full classification of
$2$-adic lattices.  It is already simpler than the
results in \cite{Jones} and \cite{Pall}.
But fine symbols package information poorly, and much
greater simplification is possible.  
We will develop this in the next two sections.

\begin{theorem}[Equivalence of fine symbols]
\label{thm-equivalence-of-fine-symbols}
Two fine symbols represent isometric lattices if and only if they are
related by a sequence of sign walking, giver permutation and giver
conversion operations.
\end{theorem}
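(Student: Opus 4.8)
The forward implication needs no work: Lemmas~\ref{lem-sign-walking-for-fine-symbols} and~\ref{lem-giver-permutation-and-conversion} say exactly that each of the three operations carries a fine symbol to one representing an isometric lattice, so the same holds for any finite composite. The plan for the converse is to show that the orbit of a fine symbol under these operations is a complete isometry invariant, by reducing every symbol to a canonical form and checking that this form is recoverable from the lattice alone. First I would record the coarse data that the operations cannot touch: all three fix, scale by scale, the dimension and the type ($\I$ or $\II$) of the constituent, and these are isometry invariants of $L$ by the classical reduction theory of $2$-adic lattices (the dimensions are read off the scale filtration by the sublattices $L\cap 2^kL^*$, and the types from the parity of the associated graded forms). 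So I may assume the two symbols share this coarse data and work inside the finite group $W$ generated by the operations acting on the finite set of fine symbols with that data fixed.

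Next I would normalize. Giver conversion reduces the number of givers in each compartment to lie in $\{0,1,2,3\}$; giver permutation then orders the $G$'s and $R$'s within each compartment; and sign walking pushes the minus signs into a standard position, as far left as the operations allow. Call the resulting symbol canonical. Since the operations preserve isometry, a symbol and its canonical form represent the same lattice, so it remains only to prove that two canonical symbols representing isometric lattices are literally equal; granting this, one passes from an arbitrary symbol up to its canonical form and back down to any isometric partner by a chain of operations, which is precisely what the theorem asserts.

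The completeness step is where the real content lies. Because the operations preserve isometry they automatically preserve the global invariants $\det L$ and $o(L)\in\Z/8$; under the operations the product of the term signs is controlled by $\det L$, and the sum of the (scaled) term oddities equals $o(L)$. But these two totals alone do not determine the orbit. The remaining freedom is organized by compartments and by the connectivity that the sign-walking relations impose on the scales: within a compartment oddity can be shuffled by giver permutation and fused in blocks of four by giver conversion, yet a compartment-level oddity survives, while signs propagate only between scales linked by a sign-walking move, so certain sign products survive. The plan is to prove that these residual per-compartment oddities and sign products, together with $\det L$ and $o(L)$, form a complete set of isometry invariants and pin down every entry of the canonical symbol. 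Here I would feed the rescaled single-scale constituents into the unimodular classification of Theorem~\ref{thm-unimodular-lattices}, and glue across scales by Witt cancellation for quadratic spaces over $\Q_2$ applied to the pieces cut out by the scale filtration.

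The hard part will be exactly this gluing, because adjacent scales are not independent: a type~$\I$ constituent trades oddity with its compartment neighbours, and, through the antisquare correction in the definition of $o(L)$, changing a sign at a scale $q=2^k$ with $k$ odd silently shifts an oddity by~$4$. This coupling is precisely what the four sign-walking moves of Lemma~\ref{lem-sign-walking-for-fine-symbols} encode, so the crux is to verify that the totals those moves preserve are the complete list of independent invariants, with no further hidden relations between neighbouring scales. Once that bookkeeping is under control the canonical form is determined by the isometry class and the theorem follows; the normalization and the forward direction are by comparison routine.
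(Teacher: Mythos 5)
Your forward direction is fine and matches the paper. But the converse, which is where all the content lives, is not proved in your proposal: it is a plan whose decisive step you explicitly defer (``the crux is to verify that the totals those moves preserve are the complete list of independent invariants, with no further hidden relations between neighbouring scales''). That verification \emph{is} the theorem; announcing a canonical form and a conjectural complete system of invariants does not discharge it. Moreover the tool you name for the gluing --- Witt cancellation for quadratic spaces over $\Q_2$ --- is the wrong one: cancellation over the field loses exactly the integral information that distinguishes inequivalent Jordan/fine decompositions of the same lattice, and an \emph{integral} cancellation statement over $\Z_2$ is needed. There is also a smaller circularity risk in your setup: you take the type of each scale constituent as a known isometry invariant, whereas the paper deliberately avoids assuming this (it is deduced only \emph{after} theorem~\ref{thm-equivalences-of-2-adic-symbols}).

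The paper's actual proof is structured quite differently and is complete: it inducts on dimension. After scaling so that $L$ is integral with a nontrivial unimodular part, one shows (lemmas~\ref{lem-can-get-norm-1-term-or-none-exists} and~\ref{lem-can-get-even-unimodular-term-or-none-exists}) that the allowed moves can force \emph{both} fine symbols to exhibit a common summand, namely a term $1^+_G$ in the odd case or $1^{\e2}_\II$ in the even case; then integral cancellation lemmas (lemmas~\ref{lem-perps-of-odd-norm-vectors} and~\ref{lem-perps-of-even-unimodular-sublattices}, proved by reflection arguments and theorem~\ref{thm-unimodular-lattices}) show the orthogonal complements are isometric, and induction finishes. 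If you want to salvage your invariant-theoretic route you would have to actually prove the completeness claim, which in practice forces you back to something like these cancellation lemmas; as written, the gap is exactly the statement to be proved.
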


Although it is natural to state the theorem here, its proof depends on
Theorem~\ref{thm-unimodular-lattices}.  The first place we use it is
to prove Theorem~\ref{thm-equivalences-of-2-adic-symbols}, so
logically the proof could go anywhere in between.  But in fact we 
defer it to
section~\ref{sec-equivalences-between-fine-decompositions} to avoid
breaking the flow of ideas.

\section{Jordan symbols}
\label{sec-Jordan-symbols}

\noindent
In this section we define and study the Jordan decompositions of a lattice.
The main point is that
``oddity fusion'' neatly wraps up all the giver permutation and conversion operations
from the previous section.  
We begin by classifying the unimodular lattices:

\begin{theorem}[Unimodular lattices]
\label{thm-unimodular-lattices}
A unimodular lattice is characterized by its dimension, type, sign
and oddity.
\end{theorem}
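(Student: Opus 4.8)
The plan is to prove both implications, the substantive one being that the four listed quantities determine the isometry class. Necessity is routine: the dimension and the type (even versus odd) are visibly preserved by isometry; for a unimodular lattice the Gram determinant is a unit, well defined up to squares of units under change of basis, so its Legendre symbol $\legendre{\det U}{2}$, the sign, is an invariant; and the oddity is by definition an invariant of the $\Q_2$-quadratic space $U\tensor\Q_2$. Thus isometric unimodular lattices carry the same symbol, and all the work lies in the converse.

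For sufficiency I would first reduce $U$ to a normal form, starting from the decomposition recalled in Section~\ref{sec-preliminaries}: every lattice is an orthogonal sum of scaled $1$-dimensional lattices and scaled copies of $1^{\pm2}_\II$. Since $U$ is unimodular every scale here equals~$1$, else $\det U$ would not be a unit. If $U$ is even then no $1$-dimensional summand can occur, because a unit is an odd norm; so $U$ is an orthogonal sum of copies of $1^{+2}_\II$ and $1^{-2}_\II$. Using $1^{+2}_\II\,1^{+2}_\II\iso1^{-2}_\II\,1^{-2}_\II$ from Lemma~\ref{lem-sign-walking-for-fine-symbols}\eqref{item-sign-walking-same-scale}, I can delete copies of $1^{-2}_\II$ two at a time, leaving at most one. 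The determinants of $1^{+2}_\II$ and $1^{-2}_\II$ being $-1$ and $3$, the resulting form is pinned down by the dimension and the sign, and its oddity is forced to be~$0$; hence equal invariants give isometric lattices in the even case.

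If $U$ is odd, then repeated application of Lemma~\ref{lem-odd-unimodular-admits-orthogonal-basis} merges each $1^{\pm2}_\II$ with a $1$-dimensional summand and re-splits it into $1$-dimensional pieces, so $U\iso\qf{t_1,\dots,t_n}$ with each $t_i$ an odd unit. I would then normalize this multiset using the three moves available within a single compartment of scale-$1$ type-$\I$ terms: same-scale sign walking, which replaces $\qf{t_i,t_j}$ by $\qf{t_i+4,t_j+4}$ (Lemma~\ref{lem-sign-walking-for-fine-symbols}\eqref{item-sign-walking-same-scale}), and giver permutation and giver conversion (Lemma~\ref{lem-giver-permutation-and-conversion}). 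Recording each term by its sign $\e_i$ and its giver/receiver label, sign walking flips two signs and so reduces the number of minus signs to its parity; giver conversion changes the receiver count by~$4$, reducing it to lie in $\{0,1,2,3\}$; and giver permutation lets me slide the lone surviving minus sign (if present) onto a giver. This produces a short explicit list of normal forms indexed by the dimension, the presence of a minus sign, and the receiver count $r\in\{0,1,2,3\}$.

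The main obstacle is the final bookkeeping that closes the odd case. All three moves are isometries preserving dimension, sign, and oddity (sign walking changes the summed entries by~$8$), so each normal form has well-defined invariants; conversely I must check these invariants separate the normal forms. Computing directly shows the oddity of the normal form is $n-2r$ when the sign is $+$ and $n+4-2r$ when the sign is $-$, and since $r\mapsto 2r$ is injective modulo~$8$ on $\{0,1,2,3\}$, the oddity recovers $r$ once the sign is known; thus the triple (dimension, sign, oddity) determines the normal form, and any two odd unimodular lattices with equal invariants reduce to the same representative. I would finish by dispatching the low-dimensional degenerate cases by hand, where not every sign/oddity combination is realizable and one cannot always manufacture three receivers or a spare giver on which to park the minus sign.
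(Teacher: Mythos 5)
Your proof is correct and rests on the same machinery as the paper's: fine decompositions together with same-scale sign walking, giver permutation, and giver conversion, with the key arithmetic point in both being that the oddity recovers the receiver count modulo~$4$. The only organizational difference is that you reduce each lattice to a normal form while the paper matches the two fine symbols directly to each other (first equalizing signs, then forcing the number $k\leq3$ of mismatched subscripts to zero via the oddity), which sidesteps the low-dimensional degenerate cases you defer --- these do check out, but the direct matching never has to decide where to park a minus sign.
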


Recall that for unimodular lattices, the 
oddity is defined to be the $2$-signature.  Since the
$2$-signature is a genuine invariant of lattices,
the oddity is a genuine invariant of unimodular lattices.
Also, recall from 
\eqref{EqEvenUnimodularSigZeroA}--\eqref{EqEvenUnimodularSigZeroB}
that all even unimodular lattices have vanishing
$2$-signature (hence vanishing oddity).  

\begin{proof}
Consider unimodular lattices $U,U'$ with the same dimension, type,
sign and oddity, and fine symbols $F,F'$ for them.  The product of the
signs in $F$ equals the sign of $U$, and similarly for $U'$.  
Since $U$ and $U'$ have the same sign, we may use sign
walking to make the signs in $F$ the same as in~$F'$.  If $U,U'$ are
even then the terms in $F$ are now the same as in $F'$, so $U\iso U'$.  So
suppose $U,U'$ are odd.

By giver permutation, and exchanging $F$ and $F'$ if necessary, we may
suppose that all non-matching subscripts are $R$ in $F$ and $G$ in
$F'$.  And by giver conversion we may suppose that the number of
non-matching subscripts is $k\leq3$.  Since changing a receiver to a
giver without changing the sign increases the oddity by two,
$o(U')=o(U)+2k$.  Since $o(U')\cong o(U)$ mod~$8$ we have $k=0$.  So
the terms in $F$ are the same as in $F'$, and $U\iso U'$.
\end{proof}

We now have license to use the notation $q^{\pm n}_t$ and $q^{\pm
  n}_\II$ from section~\ref{sec-preliminaries}.  We say that such a symbol is
\defn{legal} if it represents a lattice.  The legal symbols are
\begin{gather*}
  q^{+0}_\II
  \\
  \hbox{$q^{\pm n}_\II$ with $n$ positive and even}
  \\
  \hbox{$q^{+1}_{\pm1}$ and $q^{-1}_{\pm3}$}
  \\
  \hbox{$q^{+2}_0$, $q^{+2}_{\pm2}$, $q^{-2}_4$ and $q^{-2}_{\pm2}$}
  \\
  \hbox{$q^{\pm n}_t$ with $n>2$ and $t\cong n$ mod~$2$}
\end{gather*}
A good way to mentally organize these is to regard the
conditions for dimension${}\neq1,2$ as obvious, remember that $q^2_4$
and $q^{-2}_0$ are illegal, and remember that the subscript of
$q^{\pm1}_t$ determines the sign.

The illegality of $1^2_4$ and $1^{-2}_0$ follows by considering 
all possible sums $1^{\e1}_t1^{\e'1}_{t'}$.  
When the signs $\e,\e'$ 
are the same, either both subscripts are in $\{\pm1\}$ or both are
in $\{\pm3\}$, so the total oddity cannot be~$4$.
When the signs are different, one subscript is $\pm1$ and the other is $\pm3$, so
the total oddity cannot be~$0$.  

This calculation used the simple rules for direct sums of unimodular lattices: signs
multiply and dimensions and subscripts add, subject to the special
rules $\II+\II=\II$ and $\II+t=t$.  

\medskip
A \defn{Jordan decomposition} of a lattice means a direct sum
decomposition whose summands (called \defn{constituents})
are unimodular lattices scaled by distinct powers of~$2$.  By the
\defn{Jordan symbol} for the decomposition we mean the list of the symbols
(or \defn{terms})
$q^{\pm n}_\II$ and $q^{\pm n}_t$ for the summands.  An example we
will use in this section and the next, and mentioned already in the introduction, is 
\begin{equation}
  \label{EqBasicExample}
1^2_\II\, 
2^{-2}_6
4^3_{-3}\, 
16^1_1\,
32^2_\II\, 
64^{-2}_\II\,
128^{1}_1\, 
256^{1}_{-1}\,
512^{-4}_\II
\end{equation}
It is sometimes convenient and sometimes annoying to allow
trivial ($0$-dimensional) terms in a Jordan decomposition.

The main difficulty of $2$-adic lattices is that a given lattice may
have several inequivalent Jordan decompositions.  The purpose of the
Conway-Sloane calculus is to allow one to move easily between all 
possible isometry classes of Jordan decompositions.  Some of the data
in the Jordan symbol remains invariant under these moves.  First, if
one has two Jordan decompositions for the same lattice $L$, then each
term in one has the same dimension as the term of that scale in the
other.  (Scaling reduces the general case to the integral case, which
follows by considering the structure of the abelian group $L^*/L$.)
Second, the type $\I$ or~$\II$ of the term of any given scale is
independent of the Jordan decomposition.  (One can show this directly,
but we won't need it until after
Theorem~\ref{thm-equivalences-of-2-adic-symbols}, which
implies it.)  The signs and oddities of the constituents are not usually
invariants of~$L$.

We define a \defn{compartment} of a Jordan decomposition just as we
did for fine decompositions: a set of type $\I$ constituents, whose scales
form a sequence of consecutive powers of~$2$, and which is maximal with
these properties.  The example above has three compartments:
$2^{-2}_6
4^3_{-3}$, 
$16^1_1$ and
$128^{1}_1\, 
256^{1}_{-1}$.
The \defn{oddity} of a compartment means the oddity of the
direct sum of its Jordan constituents.
By the definition of the symbols, the compartment oddity
is  the sum (mod~$8$) of the subscripts of those constituents.  
{\bf Caution:} {\it the oddity of a compartment depends on the
Jordan decomposition, and is not an isometry invariant of the
underlying lattice}.  See 
Lemma~\ref{lem-sign-walking-for-2-adic-symbols} for how it can change.
Despite this non-invariance, the oddity of a compartment is useful:

\begin{lemma}[Oddity fusion]
\label{lem-oddity-fusion}
Consider a lattice, a Jordan symbol $J$ for it, and the Jordan symbol
$J'$ got
by reassigning all the subscripts in a compartment, in such a way that
that all resulting terms are legal and the compartment's oddity remains
unchanged.  Then $J,J'$ represent isometric lattices.
\end{lemma}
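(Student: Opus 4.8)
The plan is to reduce oddity fusion to the fine-symbol machinery already established. A compartment of a Jordan symbol is a block of type~$\I$ constituents whose scales are consecutive powers of~$2$; each such constituent $q^{\pm n}_t$ refines into a fine decomposition consisting of $q^{\pm}_{\RorG}$ terms (a sum of $n$ one-dimensional pieces). So the first step is to replace $J$ and $J'$ by fine symbols $F$ and $F'$ obtained by fine-decomposing every constituent. Crucially, fine-decomposing a single constituent does not change its scale, sign, dimension, or oddity, so the union of the refined terms coming from one compartment again forms a single compartment in the fine symbol (its scales are still exactly the consecutive powers of~$2$ in question), and the total oddity of that fine compartment equals the compartment oddity we started with.

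Next I would use the fine-symbol operations to pass from $F$ to $F'$. The hypothesis is that $J'$ reassigns the subscripts within one compartment, keeping every term legal and keeping the compartment's total oddity fixed. At the level of fine symbols, reassigning subscripts amounts to changing how the oddity is distributed among the one-dimensional pieces of the compartment, i.e.\ choosing which of them are givers and which are receivers. The giver permutation and giver conversion operations of lemma~\ref{lem-giver-permutation-and-conversion} are exactly the moves that rearrange $G$'s and $R$'s within a single compartment. The key arithmetic fact is that changing one receiver to a giver (without changing its sign) raises the oddity by~$2$, and converting four $G$'s to four $R$'s (or vice versa) changes the oddity by~$8$, i.e.\ not at all mod~$8$. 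Thus the set of giver/receiver patterns realizable by these operations is precisely the set of patterns with a fixed total oddity mod~$8$ and a fixed number of terms of each scale and sign. Since $F$ and $F'$ share the same compartment oddity and the same underlying list of scales, signs, and dimensions, their giver/receiver patterns within the compartment are connected by giver permutations and conversions.

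I expect the main obstacle to be bookkeeping rather than conceptual: one must check that after refining to fine symbols, $F$ and $F'$ genuinely differ only by giver/receiver data inside the single compartment, and that every intermediate fine symbol produced by the giver operations reassembles back into a \emph{legal} Jordan symbol. The legality bookkeeping is where the hypothesis ``all resulting terms are legal'' in $J'$ gets used; I would verify that the operations of lemma~\ref{lem-giver-permutation-and-conversion}, which stay within a compartment and preserve total oddity, never force an illegal constituent once refined pieces are regrouped. Having connected $F$ and $F'$ by giver permutation and conversion, each such move is an isometry by lemma~\ref{lem-giver-permutation-and-conversion}, so the lattices represented by $F$ and $F'$ — equivalently by $J$ and $J'$ — are isometric, which is the claim.
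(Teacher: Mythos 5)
Your overall strategy---refine $J$ and $J'$ to fine symbols $F,F'$ and connect them by the moves of lemma~\ref{lem-giver-permutation-and-conversion}---is the same as the paper's, but as written it has a genuine gap. You assert that $F$ and $F'$ ``share the same underlying list of scales, signs, and dimensions.'' That is true of the Jordan constituents, but not automatically of their fine refinements: a constituent $q^{\e n}_t$ refines into $n$ one-dimensional terms whose signs are constrained only to have product equal to $\e$, so the individual signs occurring in $F$ at a given scale need not agree with those in $F'$. Since giver permutation and conversion never change a sign, these moves alone cannot carry $F$ to $F'$ in that situation. The paper closes this by noting that the \emph{products} of the signs at each scale agree, and then sign walking between equal-scale terms (lemma~\ref{lem-sign-walking-for-fine-symbols}\eqref{item-sign-walking-same-scale}) to make the signs of $F$ match those of $F'$, checking that this move, in the form $1^{\e}_t\,1^{\e'}_{t'}\iso 1^{-\e}_{t+4}\,1^{-\e'}_{t'+4}$, does not change the compartment's oddity. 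Your argument needs this step (or else an explicit proof that the two refinements can always be chosen with identical sign patterns, which is true but requires its own verification).

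A second, smaller gap: your key sentence ``the set of giver/receiver patterns realizable by these operations is precisely the set of patterns with a fixed total oddity mod~$8$ and a fixed number of terms of each scale and sign'' is exactly what must be proved, and you only assert it. The needed argument is short: after the signs are aligned, use giver permutation (swapping $F$ and $F'$ if necessary) so that every non-matching subscript is $R$ in $F$ and $G$ in $F'$; use giver conversion to reduce the number $k$ of mismatches to at most~$3$; since each mismatch contributes $2$ to the difference of oddities, equality of the compartment oddities mod~$8$ forces $k=0$, whence $F=F'$ and the lattices are isometric. Finally, your worry about intermediate symbols reassembling into legal Jordan symbols is unnecessary: all the isometries are established at the level of fine symbols, and the legality hypothesis on $J'$ is used only to guarantee that $J'$ admits a fine refinement in the first place.
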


\begin{proof}
By discarding the rest of $J$ we may suppose it is a single
compartment.  The argument is similar to the odd case of
Theorem~\ref{thm-unimodular-lattices}.  We refine $J,J'$ to fine
symbols $F,F'$.  By hypothesis, the terms of $J'$ have the same signs
as those of~$J$.  It follows that for each scale, the product of the
signs of $F$'s terms of that scale is the same as the corresponding
product for~$F'$.  Therefore sign
walking between equal-scale terms lets us suppose that the signs in~$F$ are
the same as in~$F'$.  Recall from the proof of Lemma~\ref{lem-sign-walking-for-fine-symbols}\eqref{item-sign-walking-same-scale} that
this sort of sign walking amounts to the isomorphisms 
$1^{\e1}_t\,1^{\e'1}_{t'}\iso1^{-\e1}_{t+4}\,1^{-\e'1}_{t'+4}$, which
don't change the compartment's oddity.
    By Lemma~\ref{lem-giver-permutation-and-conversion}, 
giver permutation and conversion also leave the compartment
    oddity invariant.

By giver permutation and possibly swapping $F$ with $F'$, we may
suppose that the
non-matching subscripts are $R$'s in $F$ and $G$'s in $F'$. 
By giver conversion we may suppose $k\leq3$ subscripts fail to match, and 
the
assumed equality of oddities shows $k=0$.   Therefore
    the fine symbols are
the same, so the lattices are isometric.
\end{proof}

\section{2-adic symbols}
\label{sec-2-adic-symbols}

\noindent
One can translate sign walking between fine symbols
to the language of Jordan symbols, but it turns
out to be fussier than necessary.  Things become simpler once we incorporate oddity
fusion into the notation as follows.
The \defn{$2$-adic symbol} of a Jordan decomposition means the
Jordan symbol, except that each compartment is enclosed in brackets and
 the enclosed terms are stripped of their subscripts, whose
 sum in $\Z/8$  is attached to the right bracket as a
subscript.    This is called the compartment's oddity.
For our example~\eqref{EqBasicExample} this yields
\begin{equation*}
1^2_\II\, 
\compartment{2^{-2} 4^3}{3}
\compartment{16^1}{1}
32^2_\II\, 
64^{-2}_\II\,
\compartment{128^{1}\, 256^{1}}{0}
512^{-4}_\II
\end{equation*}
If  a compartment consists of a
single term, such as $\compartment{16^1}{1}$, 
then one usually omits the brackets: 
\begin{equation}
  \label{EqBasicExampleFused}
1^2_\II\, 
\compartment{2^{-2} 4^3}{3}
16^1_1\,
32^2_\II\, 
64^{-2}_\II\,
\compartment{128^{1}\, 256^{1}}{0}
512^{-4}_\II
\end{equation}
Lemma~\ref{lem-oddity-fusion} shows that the isometry type of a
lattice with given $2$-adic symbol is well-defined.

If a compartment has total dimension${}\leq2$ then its oddity is
constrained by its overall sign in the same way as for an odd
unimodular lattice of that dimension.  For compartments of
dimension~$1$ this is the same constraint as before.  In $2$
dimensions, $\compartment{1^+2^-}{0}$ and
$\compartment{1^-2^+}{0}$ are illegal (cannot come from any fine
symbol) because each term $1^+_{\cdots}$ or $2^+_{\cdots}$ would have
$\pm1$ as its subscript, while each term $1^-_{\cdots}$ or
$2^-_{\cdots}$ would have $\pm3$ as its subscript.  There is no way to
choose subscripts summing to~$0$.  The same reasoning
shows that $\compartment{1^+2^+}{4}$ and
$\compartment{1^-2^-}{4}$ are also illegal.

\begin{lemma}[Sign walking for $2$-adic symbols]
\label{lem-sign-walking-for-2-adic-symbols}
Consider the $2$-adic symbol of a Jordan decomposition of a
lattice, and two nontrivial terms of it that
satisfy one of the following:
\begin{enumerate}
\item
\label{item-sign-walking-between-a-compartment-and-neighbor}
they have adjacent scales and different types;
\item
\label{item-sign-walking-in-a-compartment}
they have adjacent scales and type~$\I$, and their compartment either has
dimension${}>2$ or compartment oddity~$\pm2$;
\item
\label{item-sign-walking-between-compartments}
they have type~$\I$, their scales
differ by a factor of~$4$, and the term between
them is trivial.
\end{enumerate}
Then the $2$-adic symbol got by negating their signs, and changing by~$4$ the
oddity of each compartment that contains at least one of the terms, 
represents an isometric lattice.
\end{lemma}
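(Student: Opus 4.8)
The plan is to reduce each case of sign walking for $2$-adic symbols to the already-established sign walking for fine symbols (Lemma~\ref{lem-sign-walking-for-fine-symbols}), using oddity fusion (Lemma~\ref{lem-oddity-fusion}) to translate between the subscript conventions of the two notations. First I would refine the given $2$-adic symbol to a fine symbol $F$, in which each type~$\I$ term $q^{\pm n}_t$ is broken into one-dimensional givers and receivers. The guiding principle is that sign walking between two fine-symbol terms negates their signs and, in case~\eqref{item-sign-walking-adjacent-scales-both-I}, toggles giver/receiver status; I need to track how these fine-symbol moves affect the compartment oddities, which are the sums of the numerical subscripts. Recall from the proof of Lemma~\ref{lem-sign-walking-for-fine-symbols} that sign walking of type~\eqref{item-sign-walking-adjacent-scales-I-and-II} and \eqref{item-sign-walking-scales-differ-by-factor-of-4} has the effect $t \mapsto t+4$ on each involved subscript, and this changes the compartment oddity by~$4$, exactly as the statement demands.

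For case~\eqref{item-sign-walking-between-a-compartment-and-neighbor}, the two terms have adjacent scales and different types, so one is type~$\II$ and the other type~$\I$; after refining, I pick a giver or receiver of the type~$\I$ term and apply fine sign walking~\eqref{item-sign-walking-adjacent-scales-I-and-II} against the type~$\II$ term. This negates the signs of both terms and shifts the type~$\I$ subscript by~$4$, so the compartment containing the type~$\I$ term has its oddity changed by~$4$; the type~$\II$ term is in no compartment, so only one compartment is affected, matching the claim. For case~\eqref{item-sign-walking-between-compartments}, the two terms have type~$\I$ and scales differing by a factor of~$4$ with a trivial term between them; I apply fine sign walking~\eqref{item-sign-walking-scales-differ-by-factor-of-4}, which shifts each of the two subscripts by~$4$ and negates both signs, so each of the two compartments has its oddity changed by~$4$. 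In both of these cases I then refuse the refined fine symbol back into a $2$-adic symbol via oddity fusion, which is legitimate because the resulting terms are legal and the new compartment oddities are precisely what I have computed.

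The delicate case is~\eqref{item-sign-walking-in-a-compartment}, where the two terms have adjacent scales, both type~$\I$, and lie in a common compartment; the hypothesis is that this compartment has dimension${}>2$ or compartment oddity~$\pm2$. Here fine sign walking of type~\eqref{item-sign-walking-adjacent-scales-both-I} is the natural tool, but it requires both chosen terms to be givers (or both receivers), and it negates signs while converting $G \leftrightarrow R$. The main obstacle is that the two target terms of adjacent scale need not already be matching givers/receivers, and one cannot freely adjust giver/receiver status within the compartment without obeying the legality constraints that make small compartments rigid. The plan is to first use giver permutation (Lemma~\ref{lem-giver-permutation-and-conversion}\eqref{item-giver-permutation}) to arrange a giver of each of the two scales — this is where the dimension and oddity hypotheses enter, since they guarantee enough freedom in the distribution of $G$'s and $R$'s, ruling out exactly the illegal two-dimensional configurations $\compartment{1^+2^-}{0}$, $\compartment{1^+2^+}{4}$ and their sign-flips noted before the lemma. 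Once both target terms are givers, I apply fine sign walking~\eqref{item-sign-walking-adjacent-scales-both-I} to negate their signs and turn them into receivers, then permute the receivers back; the net effect on the compartment oddity is a shift by~$4$ (two subscripts each drop by~$2$ when $G \to R$, giving $-4 \cong 4$ mod~$8$), confirming the asserted change.

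I would verify the hypothesis of case~\eqref{item-sign-walking-in-a-compartment} is sharp by checking that when the compartment has dimension~$2$ and oddity $0$ or~$4$, the required pair of same-status adjacent terms cannot be produced, consistent with the illegality of the intermediate symbols. With the giver/receiver bookkeeping in hand, each of the three cases reduces to a single application of fine sign walking followed by oddity fusion, and the claimed sign negations and oddity shifts by~$4$ follow directly from the subscript arithmetic recorded in the proof of Lemma~\ref{lem-sign-walking-for-fine-symbols}.
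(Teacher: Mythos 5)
Your proposal is correct and follows essentially the same route as the paper: refine to a fine symbol, apply the corresponding sign walk from Lemma~\ref{lem-sign-walking-for-fine-symbols}, and in case~\eqref{item-sign-walking-in-a-compartment} use giver permutation (for dimension${}>2$) or the oddity~$\pm2$ hypothesis (in dimension~$2$) to guarantee two same-status terms at the adjacent scales. The only slip is cosmetic: in the fine walk $1^{\e}_G\,2^{\e'}_G\iso1^{-\e}_R\,2^{-\e'}_R$ each numerical subscript \emph{increases} by~$2$ (since the sign flips simultaneously), not decreases, but either way the compartment oddity changes by~$4$ mod~$8$ as claimed.
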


\begin{remark}
As in Lemma~\ref{lem-sign-walking-for-fine-symbols}, one
could also call \eqref{item-sign-walking-between-compartments} sign
jumping.   If the intermediate term were nontrivial of type~$\II$,
then one could achieve the same effect by two moves
\eqref{item-sign-walking-between-a-compartment-and-neighbor}.
If the intermediate term had type~$\I$, then one could achieve
    a \emph{similar} effect by two moves 
    \eqref{item-sign-walking-in-a-compartment}.  It would not
    be quite the same, because both moves would affect the same
    compartment. So its oddity would change by~$4$ twice, ie not at all.
\end{remark}

Our example \eqref{EqBasicExampleFused}
\begin{align*} 
	&
1^2_\II\, 
\compartment{2^{-2}4^3}{3}
16^1_1\,
32^2_\II\, 
64^{-2}_\II\,
\compartment{128^{1}\, 256^{1}}{0}
512^{-4}_\II
\\
\hbox{can walk to\quad}
&
%----------------------
\underbracket[.7pt]{1^{-2}_\II\, 
\leftcompartment{2^2}}\rightcompartment{4^3}{-1}
16^1_1\,
32^2_\II\, 
64^{-2}_\II\,
\compartment{128^{1}\, 256^{1}}{0}
512^{-4}_\II
&\hbox{by \eqref{item-sign-walking-between-a-compartment-and-neighbor},}
\\
\hbox{or\quad}
&
%----------------------------
1^2_\II\, 
\compartment{\underbracket[.7pt]{2^{2} 4^{-3}_{\phantom{1}}}}{-1}
16^1_1\,
32^2_\II\, 
64^{-2}_\II\,
\compartment{128^{1}\, 256^{1}}{0}
512^{-4}_\II
&\hbox{by \eqref{item-sign-walking-in-a-compartment},}
\\
\hbox{or\quad}
&
%---------------------------
1^2_\II\, 
\leftcompartment{2^{-2}}\underbracket[.7pt]{\rightcompartment{4^{-3}}{-1}
16^{-1}_{-3}}\,
32^2_\II\, 
64^{-2}_\II\,
\compartment{128^{1}\, 256^{1}}{0}
512^{-4}_\II
&\hbox{by \eqref{item-sign-walking-between-compartments}.}
%\\
%\hbox{or\quad}
%&
%%--------------------------
%1^2_\II\, 
%\compartment{2^{-2} 4^3}{3}
%16^1_1\,
%32^2_\II\, 
%64^{-2}_\II\,
%\leftcompartment{128^{1}\,}
%    \underbracket[.7pt]{\rightcompartment{256^{-1}}{4}
%512^{4}_\II}
%&\hbox{by \eqref{item-sign-walking-between-a-compartment-and-neighbor},}
\end{align*}
Underbrackets indicate the terms involved in the moves.
One can also sign walk between scales $16$ and~$32$, between
scales  $64$ and~$128$, and between scales $256$ and~$512$.
But no sign walk is possible between scales $128$ and~$256$,
because that compartment has dimension~$2$ and 
oddity${}\mathrel{\not\equiv}\pm2$
mod~$8$.  More conceptually, a sign walk
 would give the illegal compartment $[128^-256^-]_4$.

\begin{proof}
Refine the Jordan decomposition to a fine decomposition $F$,
apply the corresponding sign walk operation \eqref{item-sign-walking-adjacent-scales-I-and-II}--\eqref{item-sign-walking-scales-differ-by-factor-of-4} from Lemma~\ref{lem-sign-walking-for-fine-symbols} to suitable terms of $F$, and
observe the corresponding change in the Jordan symbol.  
In
case \eqref{item-sign-walking-in-a-compartment} some care is required
because Lemma~\ref{lem-sign-walking-for-fine-symbols} requires both
terms of $F$ to be givers or both to be
receivers.  If the compartment has dimension${}>2$ then we may arrange
this by giver permutation (which preserves compartment
    oddity by Lemma~\ref{lem-giver-permutation-and-conversion}).   
    In dimension~$2$ the hypothesis 
$$\hbox{(compartment oddity)}{}\cong\pm2$$
rules out the case that one is a giver and one a receiver, since
givers and receivers have subscripts $1$ and $-1$ mod~$4$ respectively.
\end{proof}

\begin{theorem}[Equivalence of $2$-adic symbols]
\label{thm-equivalences-of-2-adic-symbols}
Suppose given two lattices with Jordan decompositions.  Then the lattices are
isometric if and only if the $2$-adic symbols of these
decompositions are related by a sequence of the sign walk operations in Lemma~\ref{lem-sign-walking-for-2-adic-symbols}.
\end{theorem}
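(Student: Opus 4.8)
The plan is to prove both implications, with essentially all the work in the converse. The forward implication---that $2$-adic symbols related by the moves of lemma~\ref{lem-sign-walking-for-2-adic-symbols} represent isometric lattices---is immediate, since each such move preserves the isometry class of the represented lattice and, by lemma~\ref{lem-oddity-fusion}, that lattice is well-defined. For the converse, suppose $L\iso L'$ carry Jordan decompositions with $2$-adic symbols $S,S'$. I would refine each Jordan decomposition to a fine decomposition, obtaining fine symbols $F,F'$. Grouping the terms of any fine symbol by scale yields a Jordan symbol and hence a $2$-adic symbol; call this passage the \emph{projection}. Refinement changes neither the dimension, sign and type at each scale nor the oddity of any compartment, so the projections of $F$ and $F'$ are precisely $S$ and $S'$. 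As $L\iso L'$, theorem~\ref{thm-equivalence-of-fine-symbols} joins $F$ to $F'$ by a sequence of sign walking, giver permutation and giver conversion operations, and the whole strategy is to push this sequence through the projection.

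Every one of these fine operations preserves the type of each scale, so none disturbs the compartment structure; the projection is thus defined with one fixed system of compartments along the entire sequence. I would then divide the operations into those invisible to the projection and those realizing $2$-adic sign walks. Giver permutation~\eqref{item-giver-permutation} and giver conversion~\eqref{item-giver-conversion} fix all signs and preserve every compartment's oddity, as noted in the proof of lemma~\ref{lem-oddity-fusion}; and the same-scale sign walk~\eqref{item-sign-walking-same-scale} leaves the product of signs at its scale unchanged and shifts the compartment oddity by $4+4\cong0$ in the type~$\I$ case (and not at all in the type~$\II$ case). Since the projection records only per-scale signs and per-compartment oddities, all three are invisible.

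The remaining sign walks project to moves of lemma~\ref{lem-sign-walking-for-2-adic-symbols}. Type~\eqref{item-sign-walking-adjacent-scales-I-and-II} negates the signs of a type~$\II$ term and an adjacent type~$\I$ term, shifting the latter's compartment oddity by $\pm4$, i.e.\ by $4$ mod~$8$; this is move~\eqref{item-sign-walking-between-a-compartment-and-neighbor}. Type~\eqref{item-sign-walking-adjacent-scales-both-I} negates the signs of two adjacent type~$\I$ terms sharing a compartment and shifts its oddity by two steps of $\pm2$, i.e.\ by $4$; this is move~\eqref{item-sign-walking-in-a-compartment}. Tracking these explicit subscript shifts is the routine heart of the verification and confirms the ``change of oddity by~$4$'' asserted in lemma~\ref{lem-sign-walking-for-2-adic-symbols}. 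One legality check intervenes: move~\eqref{item-sign-walking-in-a-compartment} demands dimension${}>2$ or oddity~$\pm2$, and in dimension exactly~$2$ the hypothesis of~\eqref{item-sign-walking-adjacent-scales-both-I} that both terms are givers, or both receivers, forces oddity~$\pm2$, because givers and receivers have subscripts $\cong1$ and $\cong-1$ mod~$4$. For all other intermediate symbols legality is automatic, each being the projection of a legal fine symbol.

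I expect the genuine obstacle to be type~\eqref{item-sign-walking-scales-differ-by-factor-of-4}, whose two type~$\I$ terms have scales differing by a factor of~$4$ while the intervening scale may be nontrivial, so that it need not project to a single sign jump~\eqref{item-sign-walking-between-compartments} (which requires the intervening term to be trivial). Here I would invoke the device of the remark after lemma~\ref{lem-sign-walking-for-2-adic-symbols}: when the intermediate scale has type~$\II$, so the two terms lie in different compartments, replace the move by two successive walks of type~\eqref{item-sign-walking-between-a-compartment-and-neighbor}; when it has type~$\I$, so all three scales share one compartment, replace it by two walks of type~\eqref{item-sign-walking-in-a-compartment}. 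In both cases the intermediate sign is negated twice hence restored, the two outer signs are negated once, and the compartment-oddity shifts sum to exactly the change caused by~\eqref{item-sign-walking-scales-differ-by-factor-of-4}; confirming this cancellation is the fussiest step. Concatenating the projected moves across the whole sequence then links $S$ to $S'$ by the moves of lemma~\ref{lem-sign-walking-for-2-adic-symbols}, which proves the converse.
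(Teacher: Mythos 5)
Your proposal is correct and follows essentially the same route as the paper: refine both Jordan decompositions to fine symbols, invoke Theorem~\ref{thm-equivalence-of-fine-symbols} to connect them by fine moves, and check that each fine move either leaves the $2$-adic symbol unchanged (giver permutation/conversion, same-scale walks) or projects to one of the moves of Lemma~\ref{lem-sign-walking-for-2-adic-symbols}. The paper's write-up is terser --- it simply cites Lemma~\ref{lem-sign-walking-for-2-adic-symbols} for the remaining walks --- whereas you correctly spell out the only delicate point, namely decomposing a factor-of-four fine walk with nontrivial intermediate term into two adjacent-scale $2$-adic walks, exactly as the remark following that lemma suggests.
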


\begin{proof}
The previous lemma shows that sign walks preserve isometry type.
So suppose the lattices are isometric.
Refine the Jordan decompositions to fine decompositions, apply
Theorem~\ref{thm-equivalence-of-fine-symbols} to obtain a chain of
intermediate fine symbols, and consider the corresponding $2$-adic
symbols.  
    Lemma~\ref{lem-giver-permutation-and-conversion} 
    shows that giver permutation and conversion 
    don't change compartment oddities, so they leave $2$-adic symbol
    unchanged. 
    In the proof of Lemma~\ref{lem-oddity-fusion} we explained
why  sign walking between same-scale terms also has no effect.
The effects of the remaining sign walk operations are recorded in
Lemma~\ref{lem-sign-walking-for-2-adic-symbols}.
\end{proof}

A lattice may have more than one $2$-adic symbol, but the
only remaining freedom lies in the positions of the signs:

\begin{theorem}
\label{thm-same-signs-imply-isometric-same-as-being-identical}
Suppose two given lattices have $2$-adic symbols with the same
scales, dimensions, types and signs.  Then the lattices are isometric
if and only if the symbols are equal, which amounts to having the same
compartment oddities.
\end{theorem}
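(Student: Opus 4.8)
The plan is to prove both directions, the forward one being immediate. If the two symbols are literally equal then they share all of scales, dimensions, types, signs and compartment oddities, hence are the same symbol and (by the well-definedness recorded just after lemma~\ref{lem-oddity-fusion}) represent isometric lattices. The content is therefore the converse: assuming the lattices are isometric, I want to conclude that the compartment oddities agree, since by hypothesis everything else already matches and ``equal symbols'' amounts precisely to ``equal compartment oddities.''

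First I would apply theorem~\ref{thm-equivalences-of-2-adic-symbols} to connect the two $2$-adic symbols by a finite sequence of the sign walk operations of lemma~\ref{lem-sign-walking-for-2-adic-symbols}. Each such operation is an involution (it negates two signs and alters certain compartment oddities by~$4$, both undone by repetition), and any two of them commute, since negating signs and adding~$4$ modulo~$8$ are commuting involutions. Thus the sign walks generate an elementary abelian $2$-group $G$ acting on the set of symbols with the given scales, dimensions and types, and there is a homomorphism $\pi$ from $G$ to the $\F_2$-vector space of sign patterns recording the net change of signs. Because the initial and final symbols have the same signs, the element $g\in G$ carrying one to the other lies in $\ker\pi$. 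So it suffices to show that $\ker\pi$ acts trivially on compartment oddities: a sequence of sign walks whose net effect on signs is trivial leaves every compartment oddity unchanged.

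To organize this I would encode the chosen sequence of walks as a multigraph $H$ whose vertices are the occupied scales and whose edges are the walks, each edge joining the two scales of its walk. Since a walk negates exactly the signs of its two scales, the net sign change is trivial precisely when every vertex of $H$ has even degree. By lemma~\ref{lem-sign-walking-for-2-adic-symbols} each walk changes by~$4$ the oddity of each compartment containing one of its terms, so the total change in the oddity of a compartment $C$ is $4$ times the number $n_C$ of edges of $H$ meeting $C$; I must show $n_C$ is even. Here I would use the elementary fact that in a graph all of whose degrees are even, every edge cut has even cardinality. The edges of $H$ join scales at distance~$1$ (types \eqref{item-sign-walking-between-a-compartment-and-neighbor} and \eqref{item-sign-walking-in-a-compartment}) or distance~$2$ (type \eqref{item-sign-walking-between-compartments}), and a distance-$2$ edge occurs only across a trivial intermediate scale. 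Applying the even-cut fact to the cut separating scales $\le m$ from scales $>m$, taken at each consecutive pair interior to $C$, the only edge type that can cross is the type-\eqref{item-sign-walking-in-a-compartment} walk joining that pair, so its multiplicity is even; taking $m$ at either end of $C$, the crossing edges are exactly $C$'s boundary walks at that end (a distance-$2$ edge cannot leap over the occupied end scale of $C$), so their count is even as well. Summing the interior and boundary contributions yields $n_C$ even, so the oddity of $C$ is unchanged modulo~$8$, and the two symbols coincide.

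I expect the main obstacle to be the bookkeeping in this last step: identifying, for each cut, exactly which walks cross it, and in particular ruling out type-\eqref{item-sign-walking-between-compartments} (sign-jumping) edges from crossing an interior cut of $C$ or straddling an end of $C$. The structural input that makes this clean is that every edge has scale-length at most~$2$ and that a length-$2$ edge requires a trivial middle scale, so no walk can cross a cut adjacent to an occupied scale of $C$ without actually meeting $C$ there. Once this is pinned down, the even-cut fact does all the work and the conclusion $n_C\equiv0$ is automatic.
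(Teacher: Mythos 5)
Your proof is correct, and while it opens the same way as the paper's---reducing, via theorem~\ref{thm-equivalences-of-2-adic-symbols} and the observation that the sign walks generate an elementary abelian $2$-group, to showing that a sequence of walks with trivial net effect on signs has trivial net effect on compartment oddities---the combinatorial core is genuinely different. The paper proves the stronger statement that the projection of the group generated by the available walks onto the sign patterns $\{\pm1\}^T$ is injective: it orders the walks $\D_{i_1,j_1},\dots,\D_{i_n,j_n}$ so that $i_1<j_1\le i_2<j_2\le\cdots\le i_n<j_n$ (possible exactly because each walk spans one or two scales and a span-two walk forces the intervening scale to be trivial) and reads off the linear independence of their sign images. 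You instead keep the oddities in play and run a parity argument on the multigraph of walks: trivial net sign change means every vertex has even degree, hence every edge cut is even, and your case analysis of which walks can cross the cuts interior to a compartment $C$ and at its two ends shows that the number of edges meeting $C$ is even, so its oddity moves by a multiple of~$8$. Both arguments rest on the same structural facts about the walks, but yours trades the paper's one-line independence claim for explicit, checkable bookkeeping and never needs the walks to be linearly orderable. One small point to shore up if you retain the group-theoretic framing: asserting that the walks generate a group \emph{acting} on all symbols for $L$ requires checking that performing one walk never destroys the applicability of another; the paper notes this holds because walks change neither the type of any term nor the oddity of any compartment mod~$4$. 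Your multigraph computation does not actually depend on this, since theorem~\ref{thm-equivalences-of-2-adic-symbols} already hands you a valid sequence of walks and you only tally its net effect.
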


\begin{proof}
If a $2$-adic symbol $S$ of a lattice $L$ admits a sign walk
affecting the signs of the terms of scales $2^i$, $2^j$ then we write
$\D_{i,j}(S)$ for the resulting symbol.  No sign walks  affect the
conditions for $\D_{i,j}$ to act on $S$, since they don't change the
type of any term or the oddity mod~$4$ of any compartment.  So we may
regard $\D_{i,j}$ as acting simultaneously on all $2$-adic symbols for $L$.
By its description in terms of negating signs and adjusting
compartments' oddities, $\D_{i,j}$ may be regarded as an element of
order~$2$ in the group $\{\pm1\}^T\times(\Z/8)^C$ 
where $T$ is the number of terms
present and $C$ is the number of compartments.  

The assertion of the lemma is that if a sequence of sign walks on $S$ restores the
original signs, then it also restores the original oddities.  We
rephrase this in terms of the subgroup $A$ of
$\{\pm1\}^T\times(\Z/8)^C$ generated by the $\D_{i,j}$.  Namely:
projecting $A$ to the $\{\pm1\}^T$ factor has trivial kernel.  This
is easy to see because the $\D_{i,j}$ are ordered so that they are
$\D_{i_1,j_1},\dots,\D_{i_n,j_n}$ with $ i_1<j_1 \leq i_2<j_2 \leq
\cdots \leq i_n<j_n $.  The linear independence of their
projections to $\{\pm1\}^T$ is obvious.
\end{proof}

To get a canonical symbol for a lattice $L$ one starts with any
$2$-adic symbol $S$ and walks all the minus signs as far left as
possible, canceling them when possible.  To express this formally,
we say two scales {\it can interact} if their terms are as in
Lemma~\ref{lem-sign-walking-for-2-adic-symbols}.  
(We noted in the
previous proof that the ability of two scales to interact is independent of
the particular $2$-adic symbol representing $L$.)  
We define a
\defn{signway} as an equivalence class of scales, under the
equivalence relation generated by interaction.  The language suggests
a pathway or highway along which signs can
move (or cancel).  

One can find the signways without 
fussing with 
the conditions in Lemma~\ref{lem-sign-walking-for-2-adic-symbols}.
First cut the $2$-adic symbol into  the
``trains'' of Conway and Sloane, and then (in rare
cases) cut some of the trains into signways.  
To cut the $2$-adic symbol into trains,
cut between each pair of consecutive powers of~$2$ for
which both terms have type~$\II$.  This includes $0$-dimensional
constituents, which always have type~$\II$.  
To get the signways, inspect each train for ``bad'' compartments
$[q^{\pm1}r^{\pm1}]_4$ or
$[q^{\pm1}r^{\mp1}]_0$,
%$[q^{\pm1}r^{\pm1}]_{0\,\mathrm{or}\,4}$
where $q,r$ are consecutive
powers of~$2$.  Cut in the middle of each such compartment.
To get the  three signways in our example, 
$$
%\underbracket[.7pt]{
1^2_\II\, 
\compartment{2^{-2} 4^3}{3}
16^1_1\,
32^2_\II
%}%
\ \ 
\hbox to0pt{%
    \hss\raise18pt
    \hbox{$\begin{matrix}\hbox{split here to get two trains}
        \\
    \downarrow\end{matrix}$}\hss}%
\ \ 
%\underbracket[.7pt]{
64^{-2}_\II\,
\leftcompartment{128^{1}}
%}
\ 
\hbox to0pt{%
    \hss\lower18pt
    \hbox{$\begin{matrix}
    \uparrow
    \\
    \hbox{and then split the second train here.}
    \end{matrix}$}\hss}%
\ 
%\underbracket[.7pt]{
\rightcompartment{256^{1}}{0}
512^{-4}_\II
%}
$$
%$$
%\underbracket[.7pt]{
%1^2_\II\, 
%\compartment{2^{-2} 4^3}{3}
%16^1_1\,
%32^2_\II
%}
%\,\underbracket[.7pt]{
%64^{-2}_\II\,
%\leftcompartment{128^{1}}}\,
%\underbracket[.7pt]{\rightcompartment{256^{1}}{0}
%512^{-4}_\II
%}
%$$ 
Each signway has a term of smallest scale, and by sign walking we may
suppose that all minus signs are moved to these terms or canceled with each other.
Then we say the symbol is in \defn{canonical form}, which for our example
is
\begin{equation*}
	%\label{eq-canonical-form-of-main-example}
1^{-2}_\II\, 
\compartment{2^{2} 4^3}{-1}
16^1_1\,
32^2_\II\, 
64^{-2}_\II\,
\compartment{128^{1}\, 256^{-1}}{4}
512^{4}_\II
\end{equation*}
Theorem~\ref{thm-same-signs-imply-isometric-same-as-being-identical} implies:

\begin{corollary}[Canonical form]
\label{cor-canonical-form}
Given lattices $L,L'$ and $2$-adic
symbols $S,S'$ for them in canonical form, $L\iso L'$ if and  only if
$S=S'$.
\qed
\end{corollary}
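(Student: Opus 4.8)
The plan is to deduce the corollary directly from Theorem~\ref{thm-same-signs-imply-isometric-same-as-being-identical}, the only real work being to show that canonical form pins down the signs uniquely within each signway. The reverse implication is immediate: if $S=S'$ then the symbols represent the same isometry class, by the well-definedness of the lattice attached to a $2$-adic symbol established via lemma~\ref{lem-oddity-fusion}. So I would concentrate on the forward direction. Assume $L\iso L'$. First I would record that the scales, dimensions and types of $S$ and $S'$ agree, since these are isometry invariants of the underlying lattice (scales and dimensions as noted in the discussion of Jordan symbols, types as noted there as well). In particular $S$ and $S'$ carry the same signway decomposition, because the ability of two scales to interact depends only on these invariants and on compartment oddities mod~$4$, not on the particular symbol. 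By Theorem~\ref{thm-same-signs-imply-isometric-same-as-being-identical} it then suffices to prove that $S$ and $S'$ have the same signs.

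To compare the signs I would invoke Theorem~\ref{thm-equivalences-of-2-adic-symbols}: since $L\iso L'$, the symbols $S$ and $S'$ are related by a sequence of sign walks. Each sign walk in lemma~\ref{lem-sign-walking-for-2-adic-symbols} negates the signs of exactly two terms, whose scales lie in a common signway. Reading the full sign pattern modulo~$2$ as an element of $(\Z/2)^T$, each sign walk adds a vector $e_i+e_j$ supported on an interacting pair $\{i,j\}$. Restricting attention to a single signway, whose scales are by definition connected under the interaction relation, the key observation is that these vectors $e_i+e_j$ realize precisely the even-weight subgroup of $(\Z/2)^{(\text{signway})}$: hence the reachable sign patterns within a signway form a single coset of that even-weight subgroup, and the one quantity preserved by every sign walk is the product of the signs of that signway's terms.

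Finally I would appeal to the definition of canonical form. Within each signway, a canonical symbol has all terms of sign~$+$ except possibly the one of smallest scale, so the per-signway sign product determines every sign of that signway. Since $S$ and $S'$ share the same signways and, being sign-walk related, the same per-signway sign products, their signs agree term by term. As $S$ and $S'$ now agree on scales, dimensions, types and signs and represent isometric lattices, Theorem~\ref{thm-same-signs-imply-isometric-same-as-being-identical} forces $S=S'$. The point requiring the most care is the coset description of the second paragraph: one must check that within a signway the sign walks preserve the product of signs (the easy half, since each walk flips two signs of one signway) and that this product, together with the ``minus signs pushed as far left as possible'' normalization, leaves no freedom in the canonical signs; the realizability that every even-weight change can actually be carried out is the companion connectivity fact already used to put symbols into canonical form in the discussion preceding the corollary.
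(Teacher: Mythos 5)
Your proposal is correct and follows exactly the route the paper intends: the paper derives the corollary from Theorem~\ref{thm-same-signs-imply-isometric-same-as-being-identical} without spelling out the details, and the details are precisely what you supply (sign walks preserve each signway's sign product, canonical form lets that product determine every sign, hence the signs of $S$ and $S'$ coincide and Theorem~\ref{thm-same-signs-imply-isometric-same-as-being-identical} applies). Your argument is a faithful elaboration of the paper's one-line deduction.
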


Conway and Sloane's discussion of the canonical form is in terms
of trains.
They asserted that signs can walk up and down the length of
a train,  so that after walking signs leftward, there is at most one
sign per train.  
But this is not true, as pointed out in \cite{Allcock}.
One cannot walk the 
minus sign in 
$\compartment{128^{1}\, 256^{-1}}{4}$
leftward
because there is no way to assign the 
subscripts in $128^-_{\pm3}\,256^+_{\pm1}$ so
that the compartment has oddity~$0$.  

\begin{example}
  \label{EgOrthocomplement}
  As an extended demonstration of sign walking, we determine the
  lattices $M$ with the property that $M\oplus\qf{2,2}\iso L$ where $L$ is 
  from \eqref{EqBasicExampleFused}.
  Note that $\qf{2,2}=2^2_2$.
  Obviously we require
  \begin{equation*}
    M=\underbracket[.7pt]{1^{\pm2}_\II} 
    \underbracket[.7pt]{4^{\pm3}_? 16^{\pm1}_? 32^{\pm2}_\II}
    \underbracket[.7pt]{64^{\pm2}_\II \leftcompartment{128^{\pm1}}}
    \underbracket[.7pt]{\rightcompartment{256^{\pm1}}{?} 512^{\pm4}_\II}
  \end{equation*}
  We have marked the signways with underbrackets.
   The 3rd and 4th of these
  become the  2nd and 3rd signways of~$L$ after summing with $2^2_2$.
  No sign walking is possible between distinct signways.  So the isomorphism $M\oplus2^2_2\iso L$
  shows that the terms in these signways in $M$ can be taken to
    coincide with
  the corresponding terms in  $L$.
  Next, the first two signways of $M$ fuse with the $2^2_2$ summand to form the first signway of $L$.
  The overall sign of this in $L$ is~$-$, so the total number of $-$ signs in the
  first two signways of $M$ must be odd.  By sign walking in the second signway of $M$, we reduce to
  \begin{equation*}
M\iso\Bigl(
  1^{-2}_\II 4^3_t 16^1_u 32^2_\II
  \hbox{\ \ or\ \ }
  1^{2}_\II 4^{-3}_t 16^1_u 32^2_\II
\Bigr)
\oplus
64^{-2}_\II
\compartment{128^1 256^1}{0}
512^{-4}_\II
  \end{equation*}
  where $t$ and $u$ are unknowns.
  Now we sum with $2^2_2$ to get 
  \begin{equation*}
L\iso\Bigl(
  1^{-2}_\II\compartment{2^2 4^3}{2+t} 16^1_u 32^2_\II
  \hbox{\ \ or\ \ }
  1^{2}_\II \compartment{2^2 4^{-3}}{2+t} 16^1_u 32^2_\II
\Bigr)
\oplus
\cdots
  \end{equation*}
  Then we sign walk between the first two terms, or between the second and third, to make the
  signs match those in \eqref{EqBasicExampleFused}.  That is,
  \begin{equation*}
L\iso\Bigl(
  1^{2}_\II\compartment{2^{-2} 4^3}{6+t} 16^1_u 32^2_\II
%  \hbox{\ \ or\ \ }
%  1^{2}_\II \compartment{2^{-2} 4^{3}}{6+t} 16^1_u 32^2_\II
\Bigr)
\oplus
\cdots
  \end{equation*}
  Both this and \eqref{EqBasicExampleFused} 
  represent $L$, and the signs match, so the subscripts must too.
  Therefore $6+t=3$ and $u=1$.  That is,
  \begin{equation*}
M\iso
1^{\pm2}_\II 4^{\mp3}_5 16^1_1 32^2_\II
64^{-2}_\II
\compartment{128^1 256^1}{0}
512^{-4}_\II
  \end{equation*}
  where one  ambiguous sign is~$+$ and the other is~$-$.
  The two possibilities are distinct because 
    both are in canonical form.
  It follows that the isometry group of $L$ has two orbits on summands isomorphic to $\qf{2,2}$. 
\end{example}

One can use the ideas of the proof of Theorem~\ref{thm-same-signs-imply-isometric-same-as-being-identical} to give numerical
invariants for lattices, if one prefers them to a canonical form.  
The following invariants come from
Theorem~10 of \cite[Ch.\ 15]{SPLAG}, which is proven in \cite{Xu}.  
One records the scales, dimensions and types, the \defn{adjusted oddity} of each compartment, and 
the overall sign of each signway 
(the product of the signs of the signway's terms).  Here
the adjusted oddity of a compartment means its oddity plus~$4$ for
each $-$ sign appearing in its 1st, 3rd, 5th,${}\ldots$ position, with
each $-$ sign after that compartment counted as occurring in the
``$(k+1)$st'' position, where $k$ is the number of terms in the
compartment.  
It is easy to check that 
sign walking leaves these quantities unchanged.

As an example,  the adjusted oddity of the compartment
$\compartment{2^{-2}4^3}{3}$ in
$$
1^2_\II\, 
\compartment{2^{-2}4^3}{3}
16^1_1\,
32^2_\II\, 
64^{-2}_\II\,
\compartment{128^{1}\, 256^{1}}{0}
512^{-4}_\II
$$
is $3+4+4+4=7$.  The~$3$ is the ordinary oddity, and the
first~$4$ is because the compartment has sign~$-$ in its
first position.  The last two $4$'s come from the signs on
the terms of scales $64$ and~$512$.  For purposes of the
adjusted oddity, each of these counts as appearing in
the third position of the compartment, hence contributes
$4$ to the adjusted oddity.

These invariants are clumsy because of the definition
of adjusted oddity, which has the ugly feature that it
depends
on signs outside the signway containing the relevant compartment.  
This goes against the principle
that simplified Example~\ref{EgOrthocomplement}: 
distinct signways are  isolated from each other.

Furthermore, these invariants
are really
just a complicated way of recording the canonical form while
pretending not to.
We will show how to construct the  unique $2$-adic symbol in canonical form having the same
invariants as any chosen $2$-adic lattice.  To do this we first
observe that the types of the constituents determine the
compartments.  The oddities of the compartments are
the same mod~$4$ as the given adjusted oddities.  This data controls
which sign walks are possible, hence determines the signways.
We set the sign of the first term of each signway equal to
the given overall sign of that signway, and the other signs to~$+$.
The signs then allow one to compute the compartment oddities from the
adjusted oddities.  

%\begin{remark}
%	Jones' original canonical form \cite{Jones} is phrased in terms of fine decompositions:
%	for any given lattice he distinguishes a ``canonical'' fine decomposition, up
%	to isometry.  
%	It can be obtained, starting from an arbitrary fine decomposition, 
%	by doing the following for each scale~$q$, proceeding from lowest to highest.
%	(1) In case $q_\II(2q)_\I$, sign walk to make the overall sign of scale~$q$ be~$+$.
%	(2) In case $q_\II$, use $q_\II^{-2}\oplus q_\II^{-2}\iso q_\II^{+2}\oplus q_\II^{+2}$ to convert
%	as many $-$'s to $+$'s as possible.  
%	($1'$) In case $q_\I$,  
%	use giver conversion to make as many scale~$q$ terms as possible be
%	givers.  ($2'$) In case $q_\I(2q)_\I$,
%	use sign walking to convert all remaining 
%	receivers at scale~$q$ to givers.  ($3'$)  In case
%	$q_\I(2q)_\II^{\neq0}$ or $q_\I(4q)_\I$ or
%	$q_\I(2q)_G(2q)_G$ or $q_\I(2q)_R(2q)_R$, do a sign walk if necessary to make the total
%	number of signs at scale~$q$ be even. In the last two of these cases this might turn some
%	scale~$q$ givers to receivers, but they can be restored to givers by additional
%	giver/receiver permutations.  ($4'$)  In case $q_\I$, use sign walks at scale~$q$ to make as many givers
%	as possible have sign~$+$ and as many receivers as possible have sign~$-$.
%\end{remark}

\section{Equivalences between fine decompositions}
\label{sec-equivalences-between-fine-decompositions}

\noindent
In this section we give the deferred proof of
Theorem~\ref{thm-equivalence-of-fine-symbols}: two fine symbols
represent isometric lattices if and only if they are related by sign
walks and giver permutation and conversion.  
Logically, it belongs anywhere between Theorems \ref{thm-unimodular-lattices} and~\ref{thm-equivalences-of-2-adic-symbols}.
The next two lemmas  are standard; our proofs are adapted from Cassels
\cite[pp.~120--122]{Cassels}.

\begin{lemma}
\label{lem-perps-of-odd-norm-vectors}
Suppose $L$ is an integral lattice, that $x,x'\in L$ have the same odd
norm, and that their orthogonal complements $x^\perp,x'^\perp$ are
either both odd or both even.  Then $x^\perp\iso x'^\perp$.
\end{lemma}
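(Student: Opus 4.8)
The plan is to produce an isometry $\sigma$ of the ambient lattice $L$ with $\sigma(x)=x'$; restricting $\sigma$ then gives the desired isometry $x^\perp\iso x'^\perp$. Write $n=\langle x,x\rangle=\langle x',x'\rangle$, an odd unit, and $b=\langle x,x'\rangle$. The basic tool is the reflection $\tau_v(y)=y-\frac{2\langle y,v\rangle}{\langle v,v\rangle}v$, which fixes $v^\perp$, negates $v$, and maps $L$ to itself as soon as $\langle v,v\rangle$ has $2$-adic valuation at most~$1$, since then $2/\langle v,v\rangle\in\Z_2$ while $\langle y,v\rangle\in\Z_2$. Because $n$ is a unit, $x$ is primitive and $L=\Z_2x\perp x^\perp$, and likewise for $x'$.

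First I would dispose of the case that $b$ is even. Then $\langle x-x',x-x'\rangle=2(n-b)$ has valuation exactly~$1$, so $\tau_{x-x'}$ is an isometry of $L$; and since $x,x'$ have equal norm one checks directly that $\tau_{x-x'}(x)=x'$. This settles the even case without invoking the parity hypothesis at all.

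The substantive case is $b$ odd, where $n\pm b$ are both even and neither $\tau_{x\pm x'}$ need preserve $L$. Here my plan is a two-step detour $x\mapsto x_1\mapsto x'$: if I can find $w\in L$ whose norm has valuation exactly~$1$ and which pairs oddly with both $x$ and $x'$, then $x_1:=\tau_w(x)$ again has norm~$n$, and a short computation gives $\langle x_1,x'\rangle=b-\langle x,w\rangle\langle x',w\rangle/(\tfrac12\langle w,w\rangle)$, which is even, so the even case applies to the pair $(x_1,x')$. When $x^\perp$ and $x'^\perp$ are both odd, such a $w$ can be manufactured from an odd-norm vector $p\in x^\perp$: take $w=x+p$ and adjust $p$ so that $\langle w,w\rangle\cong2$ mod~$4$ and $\langle x',p\rangle$ is even. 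In the tight low-rank situations (for instance $x^\perp$ of rank~$1$) I would instead verify the isometry by an explicit two-dimensional calculation.

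The hard part will be the case $b$ odd with $x^\perp,x'^\perp$ both even. In the splitting $L=\Z_2x\perp x^\perp$ an odd-norm vector must have odd $x$-coordinate, so \emph{every} norm-$n$ vector pairs oddly with $x$; hence no reflection detour can move $x$ through an even pairing, and the reflection argument alone breaks down. This is exactly where the parity hypothesis is indispensable. To overcome it I would pass to a unipotent (Eichler-type) transformation: writing $x'=\mu x+u$ with $\mu$ an odd unit and $u\in x^\perp$ satisfying $\langle u,u\rangle=n(1-\mu^2)\cong0$ mod~$8$, one can hope to carry $x$ to $x'$ by such a transformation once $L$ contains a suitable isotropic vector, which can be arranged after splitting off a hyperbolic plane; the residual anisotropic low-rank cases I would treat directly, using Theorem~\ref{thm-unimodular-lattices} for the unimodular pieces. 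Checking that this transformation is integral and has the intended effect, and organizing the low-rank base cases, is the main obstacle I anticipate.
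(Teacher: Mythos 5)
Your opening move (the reflection in $x-x'$ when $b=x\cdot x'$ is even) is exactly the paper's, and the reduction of the odd-$b$ case to the even one via a detour $\tau_w$ is sound \emph{as far as it goes}. But the construction of $w$ is a genuine gap, and not only in low rank. You need $w\in L$ with $w^2\cong 2$ mod~$4$ pairing oddly with $x$; writing $w=ax+p$ with $p\in x^\perp$, oddness of $w\cdot x=an$ forces $a$ odd, so $w^2\cong n+p^2$ mod~$8$, and this lies in $2+4\Z_2$ only if $x^\perp$ represents an odd number in the right class mod~$4$. Concretely, take $L=\qf{3,1,8,8}$, let $x$ be the norm-$3$ generator, and let $x'=3x+(0,0,\sqrt{-7},2)$ (legitimate since $-7\cong1$ mod~$8$): then $x'^2=3$, $x\cdot x'=9$ is odd, and both complements are odd (each contains the norm-$1$ generator), yet every $w$ with $w\cdot x$ odd has $w^2\cong 3+c^2\cong 3$, $4$ or $7$ mod~$8$, never $2$ mod~$4$. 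So no detour exists, and the failure is not confined to ``tight low-rank situations.'' The missing idea is the paper's: after replacing $x'$ by $-x'$ to arrange $(x-x')^2\cong4$ mod~$8$, either $(x-x')\cdot L\sset2\Z_2$, in which case the reflection in $x-x'$ \emph{itself} preserves $L$ (you only ever reflect in vectors of norm $2\cdot(\hbox{odd})$ and overlook that norm $4\cdot(\hbox{odd})$ also works when all the relevant pairings are even), or else some $y$ pairs oddly with $x-x'$, in which case $x$, $x-x'$, $y$ span a unimodular summand containing both $x$ and $x'$.

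That second alternative also dissolves what you call the hard part. You are aiming at something stronger than the lemma requires, namely an isometry of $L$ carrying $x$ to $x'$. The paper never produces one in the difficult cases: it identifies $x^\perp\iso x'^\perp$ by comparing dimension, determinant and oddity of the two complements inside a common unimodular summand and invoking theorem~\ref{thm-unimodular-lattices}, and this is precisely where the both-odd/both-even hypothesis enters (to match types). Your Eichler-transvection plan for the both-even case is left entirely open --- you would still need to produce the isotropic vector, verify integrality of the transvection, and handle the residual anisotropic cases --- and it becomes unnecessary once the unimodular-summand reduction is in place.
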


\begin{proof}
First, $(x-x')^2$ is even.  If it is twice an odd number then
the reflection in $x-x'$ is an isometry of~$L$.  This reflection exchanges
$x$ and $x'$, so it gives an isometry between $x^\perp$ and
$x'^\perp$.  This argument applies in particular if $x\cdot x'$ is
even.  So we may restrict to the case that $x\cdot x'$ is odd and
$(x-x')^2$ is divisible by~$4$.  Next, note that $(x+x')^2$ differs
from $(x-x')^2$ by $4x\cdot x'\cong4$ mod~$8$.  So by replacing $x'$
by $-x'$ we may suppose that $(x-x')^2\cong4$ mod~$8$.  This
replacement is harmless because $\pm x'$ have the same orthogonal
complement.  

If it happens that $(x-x')\cdot
L\sset2\Z_2$ then the reflection in $x-x'$ preserves $L$ and we may
argue as before.  So suppose some $y\in L$ has odd inner product with
$x-x'$. 
Then the inner product matrix of $x,x-x',y$ is
$$
\begin{pmatrix}
1&0&?\\
0&0&1\\
?&1&?
\end{pmatrix}
\quad\hbox{mod~$2$,}
$$ which has odd determinant.  Therefore these three vectors span a
unimodular summand of $L$, so $L$ has a Jordan decomposition whose
unimodular part $L_0$ contains both $x$ and $x'$.  Note that $x$'s
orthogonal complement in $L_0$ is even just if its orthogonal
complement in $L$ is, and similarly for $x'$.  So 
by discarding the rest of
the decomposition we may suppose $L=L_0$, without losing our
hypothesis that $x^\perp,x'^\perp$ are both odd or both even.
Now,  $x^\perp$ is unimodular
with $\det(x^\perp)=(\det L)/x^2$ and oddity
$o(x^\perp)=o(L)-x^2$, and similarly for $x'$.  Since
$x^2=x'^2$, Theorem~\ref{thm-unimodular-lattices} implies $x^\perp\iso x'^\perp$.
\end{proof}

\begin{lemma}
\label{lem-perps-of-even-unimodular-sublattices}
Suppose $L$ is an integral lattice and $U,U'\sset L$ are isometric
even unimodular sublattices.  Then $U^\perp\iso U'^\perp$.
\end{lemma}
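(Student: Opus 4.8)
The plan is to reduce the even case to a statement about \emph{odd} unimodular sublattices, where Lemma~\ref{lem-perps-of-odd-norm-vectors} can be applied one vector at a time. The obstruction to a one-line argument is that $U$ and $U'$ are even, so $L=U\oplus U^\perp=U'\oplus U'^\perp$ would give the result immediately \emph{if} Witt cancellation held over $\Z_2$; it does not, and moreover $U$ contains no odd-norm vectors to feed into Lemma~\ref{lem-perps-of-odd-norm-vectors}.

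First I would adjoin a unit vector. Let $\hat L=L\perp\langle e\rangle$ with $e^2=1$, and set $\hat U=U\perp\langle e\rangle$ and $\hat U'=U'\perp\langle e\rangle$. These are odd unimodular sublattices of the integral lattice $\hat L$, they are isometric because $U\iso U'$, and since $e$ is orthogonal to all of $L$ one checks that $\hat U^\perp_{\hat L}=U^\perp_L$ and $\hat U'^\perp_{\hat L}=U'^\perp_L$. Thus it suffices to prove the analogous statement for odd unimodular sublattices. I would also record the parity fact I will need: since $U,U'$ are even, $L$ is odd if and only if $U^\perp$ is odd, and likewise for $U'^\perp$, so $U^\perp$ and $U'^\perp$ have the same parity.

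I would then prove, by induction on $\dim W$, the following: if $W\iso W'$ are odd unimodular sublattices of an integral lattice $M$ and $W^\perp_M,W'^\perp_M$ are both odd or both even, then $W^\perp_M\iso W'^\perp_M$. (The parity hypothesis is supplied for our pair by the previous paragraph.) For $\dim W=1$ this is immediate from Lemma~\ref{lem-perps-of-odd-norm-vectors}. For $\dim W\ge 2$ I would use that an odd unimodular lattice is diagonalizable over $\Z_2$ — which follows by repeatedly applying Lemma~\ref{lem-odd-unimodular-admits-orthogonal-basis} to split each even rank-$2$ block $1^{\pm2}_\II$ against an odd vector — to choose a unit vector $x\in W$ with $x^\perp_W$ \emph{odd}. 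Writing $x'=\phi(x)$ for an isometry $\phi\colon W\to W'$, the complements decompose as $x^\perp_M=x^\perp_W\perp W^\perp_M$ and $x'^\perp_M=x'^\perp_{W'}\perp W'^\perp_M$, so both $x^\perp_M$ and $x'^\perp_M$ are odd and Lemma~\ref{lem-perps-of-odd-norm-vectors} yields an isometry $\psi\colon x^\perp_M\to x'^\perp_M$. Now $x^\perp_W$ and $\psi^{-1}(x'^\perp_{W'})$ are isometric odd unimodular sublattices of $x^\perp_M$ of dimension $\dim W-1$, and their complements in $x^\perp_M$ are $W^\perp_M$ and a copy of $W'^\perp_M$, which agree in parity by hypothesis. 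The induction hypothesis then gives $W^\perp_M\iso W'^\perp_M$, completing the step.

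I expect the main work to be the bookkeeping that keeps the parity hypothesis of Lemma~\ref{lem-perps-of-odd-norm-vectors} satisfied throughout the induction; this is precisely what forces the choice of a peeling vector $x$ with odd complement $x^\perp_W$ (hence the appeal to diagonalizability) and what makes it necessary to carry the parity condition inside the inductive statement rather than prove an unconditional result. Indeed the unconditional statement for odd unimodular sublattices is false, which is why Lemma~\ref{lem-perps-of-odd-norm-vectors} is itself conditional.
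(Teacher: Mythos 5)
Your proof is correct and is essentially the paper's own argument: the paper likewise adjoins $\qf{1}$ to $U$, $U'$ and $L$, uses Lemma~\ref{lem-odd-unimodular-admits-orthogonal-basis} to extract an orthogonal basis of $U\oplus\qf{1}$, and peels off the basis vectors one at a time with Lemma~\ref{lem-perps-of-odd-norm-vectors}, noting (as you do) that the parity hypothesis is automatic until the final step, where it holds because $U,U'$ are even and so their complements inherit the parity of $L$. Your only difference is packaging the peeling as a formal induction carrying the parity condition, which is just a more explicit rendering of the same bookkeeping.
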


\begin{proof}
$U\oplus\qf{1}$ has an orthogonal basis  $x_1,\dots,x_n$ by
  Lemma~\ref{lem-odd-unimodular-admits-orthogonal-basis}, and we write $x_1',\dots,x_n'$ for the basis for $U'\oplus\qf{1}$
corresponding to it under some isometry $U\iso U'$.  Apply the
previous lemma $n$ times, starting with $L\oplus\qf{1}$.  (In the $n$th application we
need the observation that the orthogonal complements of $U,U'$ in $L$
are both even or both odd.  This holds because these orthogonal
complements are even or odd
according to whether $L$ is.)
\end{proof}

\begin{lemma}
\label{lem-can-get-norm-1-term-or-none-exists}
Suppose $L$ is an integral lattice and that $1^+_G$ is a term in some
fine symbol for $L$.  
Then we
may apply a sequence of sign
walking and giver permutation and conversion operations to transform
any other fine symbol $F$ for $L$ into one possessing  a
term $1^+_G$.
\end{lemma}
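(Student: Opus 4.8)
The plan is to observe that the hypothesis is equivalent to $L$ representing the value~$1$. Since $1^+_G$ is the lattice $\qf1$, a fine symbol with a $1^+_G$ term exhibits $\qf1$ as a direct summand, so $L$ has a vector of norm~$1$; conversely any norm-$1$ vector $x_0$ of an integral lattice has $\langle x_0,x_0\rangle=1$ a unit, so the functional $\langle x_0,-\rangle$ is onto and $\qf{x_0}\iso\qf1$ splits off. Thus I fix a norm-$1$ vector $x_0\in L$ and aim to manufacture a $1^+_G$ term in an arbitrary fine symbol $F$.

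First I analyze the scale-$1$ part. Writing the fine decomposition underlying $F$ as $L=U\oplus W$, where $U$ gathers the scale-$1$ terms (a unimodular lattice) and $W$ the terms of scale${}\ge2$ (all of whose norms are even), I note that $\dim U$ equals the scale-$1$ dimension of $L$, which is an invariant computed from $L^*/L$ and is positive by hypothesis; moreover $U$ is odd, since otherwise every norm of $U\oplus W$ would be even, contradicting $x_0^2=1$. Projecting $x_0=u+w$ with $u\in U$ and $w\in W$ gives $u^2=1-w^2$, a unit, so $u$ is primitive and generates an orthogonal direct summand $\qf{u^2}$ of $U$. Because any two fine symbols of a given unimodular lattice are related by sign walking, giver permutation and giver conversion—exactly the content established in the proof of theorem~\ref{thm-unimodular-lattices}, using only scale-$1$ operations that leave $W$ untouched—I may apply such operations to $F$ to arrange that $\qf{u^2}$ is literally one of its scale-$1$ terms; call it $T$.

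The crux is to convert $T=\qf{u^2}$ into $\qf1=1^+_G$, and the governing datum is the residue of $w^2=1-u^2$ modulo~$8$. If $w^2\cong0$ then $u^2\cong1$ and $T$ is already $1^+_G$. Otherwise $w$ is a nonzero even-norm vector, and the relation $u^2=1-w^2$ pins down a companion term whose giver/receiver type is precisely matched to $T$: when $w^2\cong2$ the companion is a scale-$2$ giver and giver permutation (lemma~\ref{lem-giver-permutation-and-conversion}\eqref{item-giver-permutation}) within the scale-$1$--scale-$2$ compartment sends $T$ to $1^+_G$ while fixing signs; when $w^2\cong6$ the companion is a scale-$2$ receiver and the sign walk of lemma~\ref{lem-sign-walking-for-fine-symbols}\eqref{item-sign-walking-adjacent-scales-both-I} (two receivers of adjacent scale) does it; and when $w^2\cong4$ the companion has scale~$4$ and the sign jump of lemma~\ref{lem-sign-walking-for-fine-symbols}\eqref{item-sign-walking-scales-differ-by-factor-of-4} applies. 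One checks each case by verifying the rank-$\le2$ isometry $\qf{u^2,w^2}\iso\qf{1,u^2w^2}$ and reading off that it is realized by the single indicated move; for instance $\qf{3,6}\iso\qf{1,2}$ is the receiver sign walk $1^-_R\,2^-_R\iso1^+_G\,2^+_G$, and $\qf{-3,4}\iso\qf{1,-12}$ is the sign jump $1^-_G\,4^+_G\iso1^+_G\,4^-_G$.

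The main obstacle is that $w$ is merely a vector of $W$, whereas the moves above require the companion $\qf{w^2}$ to be an honest term of $F$ at the scale dictated by $w^2$. I would dispose of this by choosing the norm-$1$ vector so that $w$ is primitive in $W$ (dividing out any power of~$2$ only raises the companion scale and is handled identically); then $w^2$ is $2$ or $4$ times a unit and $\qf w$ is a genuine scale-$2$ resp.\ scale-$4$ orthogonal summand. The remaining work is to transport this rank-$2$ block into the chosen fine decomposition and reassemble the complement using only permitted operations, which is exactly the kind of cancellation carried out in lemmas~\ref{lem-perps-of-odd-norm-vectors} and~\ref{lem-perps-of-even-unimodular-sublattices}; I expect this alignment step, rather than the residue bookkeeping, to be the delicate point of the argument.
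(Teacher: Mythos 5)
There is a genuine gap, and it occurs in your very first sentence: the hypothesis of the lemma is \emph{not} equivalent to $L$ representing $1$. Having a $1^+_G$ term in a fine symbol requires more than a norm-$1$ orthogonal summand: it also requires the remaining scale-$1$ part to decompose into odd rank-$1$ terms (or be trivial), since a fine decomposition forbids mixing $1^{\pm}_{\RorG}$ with $1^{\pm2}_\II$. Concretely, $L=\qf{-1,-1,3}$ represents $1$ (take $x_0=(1,1,1)$) and $\qf1$ splits off, with $x_0^\perp\iso1^{-2}_\II$; but its only fine symbols are $\qf{-1,-1,3}$ and $\qf{3,3,3}$ (the odd unimodular lattices of determinant $3$ and oddity $1$), neither of which contains $\qf1=1^+_G$. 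This is precisely the phenomenon the paper isolates at the end of its proof: the lattices $\qf{-1,-1,-1}$ and $\qf{-1,-1,3}$ plus something with norms divisible by $4$ ``admit norm~$1$ summands, but do not admit fine decompositions with $1^+_G$ terms.'' Your construction breaks on this example at the step ``arrange that $\qf{u^2}$ is literally one of the scale-$1$ terms'': the decomposition $\qf{u^2}\oplus u^\perp_U$ of $U$ need not refine to a fine decomposition retaining $\qf{u^2}$ as a term when $u^\perp_U$ is even, and the equivalence of fine symbols of $U$ then relates only symbols with three odd rank-$1$ terms, none of them $\qf1$.

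Because of this, the overall architecture (manufacture the term from a norm-$1$ vector) cannot succeed as stated; the hypothesis must be used in a stronger way than ``$L$ represents $1$.'' The paper instead works combinatorially on $F$ itself: sign walking first forces a $+$ sign at scale $1$, then giver permutation/conversion or further sign walks force a $G$ subscript, and the handful of residual configurations where no move is available are shown not to satisfy the hypothesis --- mostly by checking that those lattices do not represent $1$ mod~$8$, and in the two delicate cases above by the orthogonal-complement argument in $U/2U$. Separately, your deferred ``alignment step'' (making $\qf{w}$ an honest term of $F$ at the right scale) is not a routine cancellation in the style of lemmas~\ref{lem-perps-of-odd-norm-vectors} and~\ref{lem-perps-of-even-unimodular-sublattices}: those lemmas produce isometries of orthogonal complements, not sequences of the three permitted symbol moves, so even granting them you would still owe an argument that the resulting equivalence of fine symbols is realized by sign walking, giver permutation and giver conversion --- which is essentially the statement of theorem~\ref{thm-equivalence-of-fine-symbols} that this lemma is meant to help prove.
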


\begin{proof}
We claim first that after some of these operations we may suppose $F$
has a term $1^{+}_{\cdots}$.
Because $L$ is odd, $F$'s terms of scale~$1$ have the form $1^{\pm}_{\RorG}$.
If $F$ has more
than one such term then we can obtain a sign~$+$ by sign walking,
so suppose it has only one term, of sign~$-$.  If there are type~$\I$  terms of
scale~$4$ then again we can use sign walking, so suppose
all scale~$4$ terms have type~$\II$.  We can do the same thing if there are
any terms~$2^{\pm2}_\II$.  Or terms $2^{\pm1}_{\RorG}$, if the
compartment consisting of the scale $1$ and~$2$ terms has
at least two givers or two receivers.  This holds in particular if
there is more than one term of scale~$2$.  So
we have reduced to the case
$$
F=1^-_{\RorG}\,4_\II^{\cdots}\,8_{\cdots}^{\cdots}\cdots
\quad\hbox{or}\quad
F=1^-_{\RorG}\,2^{\pm}_{\RorG}\,4_\II^{\cdots}\,8_{\cdots}^{\cdots}\cdots
$$
where in the latter case one subscript is $G$ and the other is~$R$.  
(Here and below, the superscript and subscript dots indicate
any possibilities for the number of terms 
at that scale, and their decorations in that position. 
In particular, there might be no terms of that scale.
The dots at the end indicate terms of higher scale than the
ones already listed.)
By giver permutation we may
suppose
$$
F=1^-_{\RorG}\,4_\II^{\cdots}\,8_{\cdots}^{\cdots}\cdots
\quad\hbox{or}\quad
F=1^-_G\,2^{\pm}_R\,4_\II^{\cdots}\,8_{\cdots}^{\cdots}\cdots
$$
None of these cases occur, because 
these lattices don't represent $1$ mod~$8$, contrary to the hypothesis
that some fine decomposition has a term~$1^+_G$.
This non-representation
is easy to see because $L$ is $\qf{\pm3}$ or $\qf{5,-2}$ or
$\qf{5,6}$, plus a lattice in which all norms are divisible by~$8$.  

So we may suppose $F$ has a term $1^+_{\cdots}$, and must show that after
further operations we may suppose it has a term $1^+_G$.  
We are done unless our term $1^+_{\cdots}$ is $1^+_R$. If the
compartment $C$ containing it has any givers then we may use giver
permutation to complete the proof.   So suppose $C$ consists of
receivers.  If there are $4$ receivers then we may
convert them to givers, reducing to the previous case.  
If $C$ has two terms of
different scales, neither of which is our $1^+_R$ term, then
we may use sign walking to convert them to givers, again reducing to a known case.   Only a few
cases remain, none of which actually occur, by a similar argument to
the previous paragraph.

Namely, after more sign
walking we may take  $F$ to be
\begin{displaymath}
\bigl(
1^+_R\,2^{\pm}_R\hbox{ or } 1^+_R\,2^+_R\,2^{\pm}_R
\bigr)
4^{\cdots}_\II %\, 8^{\cdots}_{\cdots}
\cdots
\quad\hbox{or}\quad
\bigl(1^+_R \hbox{ or } 1^+_R \, 1^{\pm}_R \hbox{ or }
1^+_R\,1^+_R\,1^{\pm}_R
\bigr)2^{\cdots}_\II %\, 4^{\cdots}_{\cdots}
\cdots
\end{displaymath}
The first set of possibilities is
\begin{align*}
\Bigl(
\qf{-1,-2}
&\hbox{ or }
\qf{-1,6}
\hbox{ or }
\qf{-1,-2,-2}
\hbox{ or }
\qf{-1,-2,6}
\Bigr)\\
&\oplus
\hbox{(a lattice with all norms divisible by~$8$)}
\end{align*}
none of which represent $1$ mod~$8$.  The second set
of possibilities is
\begin{align*}
\Bigl(
\qf{-1}
\hbox{ or }
\qf{-1,-1}
&\hbox{ or }
\qf{-1,3}
\hbox{ or }
\qf{-1,-1,-1}
\hbox{ or }
\qf{-1,-1,3}
\Bigr)\\
&\oplus
\hbox{(a lattice with all norms divisible by~$4$)}
\end{align*}
and only the last two cases represent $1$ mod~$8$.  But in these cases every
vector $x$ of norm $1$ mod~$8$ projects to $\xbar:=(1,1,1)$ in $U/2U$, where $U$
is the summand $\qf{-1,-1,-1}$ or $\qf{-1,-1,3}$.  There are no
odd-norm vectors orthogonal to $x$ since the orthogonal complement of
$\xbar$ in $U/2U$ consists entirely of self-orthogonal vectors.  So while
these lattices admit norm~$1$ summands, they do not admit fine
decompositions with $1^+_G$ terms.  
\end{proof}

\begin{lemma}
\label{lem-can-get-even-unimodular-term-or-none-exists}
Suppose $\e=\pm$.  Then Lemma~\ref{lem-can-get-norm-1-term-or-none-exists} holds with  $1^{\e2}_\II$ in
place of $1^+_G$.
\end{lemma}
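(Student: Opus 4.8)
The plan is to follow the proof of lemma~\ref{lem-can-get-norm-1-term-or-none-exists}, which is actually simpler here because a type~$\II$ term carries no giver/receiver information: once a $1^{\e2}_\II$ term has been produced we are done, with no analogue of the ``convert to a giver'' step. First I would note that $L$ is even. Indeed the hypothesized fine symbol contains a type~$\II$ term of scale~$1$, so by the definition of a fine decomposition all of its scale~$1$ terms are type~$\II$, whence the scale~$1$ constituent, and hence $L$, is even. Therefore in the given symbol $F$ the scale~$1$ terms are all of the form $1^{\pm2}_\II$; write $m$ for their number, so that the scale~$1$ dimension (which is an invariant) equals $2m$ with $m\geq1$.

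The cases $m\geq2$ and part of the case $m=1$ are handled directly by the moves. If $m\geq2$ and no term $1^{\e2}_\II$ is present, then every scale~$1$ term is $1^{-\e2}_\II$ and a single same-scale sign walk (lemma~\ref{lem-sign-walking-for-fine-symbols}\eqref{item-sign-walking-same-scale}, in the form $1^{-\e2}_\II\,1^{-\e2}_\II\iso1^{\e2}_\II\,1^{\e2}_\II$) creates one. If $m=1$, the single scale~$1$ term is some $1^{\e'2}_\II$; if $\e'=\e$ we are done, while if $\e'=-\e$ and scale~$2$ contains a type~$\I$ term, then the sign walk of lemma~\ref{lem-sign-walking-for-fine-symbols}\eqref{item-sign-walking-adjacent-scales-I-and-II} flips the scale~$1$ sign to~$\e$.

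The only substantive case is $m=1$, $\e'=-\e$, with scale~$2$ of type~$\II$ or absent, and I would show it cannot occur. The first point is that the scale~$2$ type is an invariant. Setting $L_{\geq2}=\{x\in L:x\cdot L\sset2\Z_2\}$, one checks for any fine decomposition $L=U_1\oplus2U_2\oplus4U_4\oplus\cdots$ that $L_{\geq2}=2U_1\oplus2U_2\oplus4U_4\oplus\cdots$, so the intrinsic lattice $\frac12L_{\geq2}=U_1\oplus U_2\oplus2U_4\oplus\cdots$ (inner products divided by~$2$) is even exactly when $U_2$ is (since $U_1$ already is), i.e.\ when scale~$2$ has type~$\II$ or is absent. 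Hence the hypothesized symbol $F_0$ (the one containing $1^{\e2}_\II$) also has scale~$2$ of type~$\II$ or absent, and both $F$ and $F_0$ present $L$ as $1^{\pm2}_\II$ plus a lattice all of whose norms are divisible by~$4$, the scale~$1$ signs being $-\e$ and $\e$ respectively. Exactly one of these is~$+$: its scale~$1$ summand is $\hypplane$, which contains a primitive isotropic vector $z$, and this $z$ satisfies $z\cdot L\not\sset2\Z_2$ and $z^2=0$. The other has scale~$1$ summand $\bigl(\begin{smallmatrix}2&1\\1&2\end{smallmatrix}\bigr)$, every vector of which outside twice the summand has norm ${}\cong2$ mod~$4$; since its complement has norms divisible by~$4$, every $x\in L$ with $x\cdot L\not\sset2\Z_2$ has norm ${}\cong2$ mod~$4$. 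As $x\cdot L\not\sset2\Z_2$ is intrinsic to $L$ and holds for $z$, we get $z^2\cong2$ mod~$4$, contradicting $z^2=0$. So $\e'=\e$ and $F$ already possesses a $1^{\e2}_\II$ term.

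I expect the computation of $L_{\geq2}$, and the resulting invariance of the scale~$2$ type, to be the crux; the rest is a direct transcription of the sign-walk moves. The bookkeeping to watch is that $m\geq1$, that the ``scale~$2$ absent'' subcase is genuinely covered by the same argument, and that the two decompositions really carry opposite scale~$1$ signs.
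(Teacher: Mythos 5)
Your proof is correct, and its skeleton is the same as the paper's: dispose of the case of two scale-$1$ terms by same-scale sign walking, dispose of the case of a type-$\I$ scale-$2$ term by adjacent-scale sign walking, and then show that the residual case $F=1^{-\e2}_\II\,2^{\cdots}_\II\,4^{\cdots}_{\cdots}\cdots$ cannot coexist with a $1^{\e2}_\II$ summand by a norms-mod-$4$ argument. The two proofs differ only in how that final contradiction is packaged. The paper works entirely from $F$: writing $U$ for its $1^{-\e2}_\II$ summand, the norm mod~$4$ of any element of $L$ depends only on its image in $U/2U$; the hypothesized summand $U'\iso 1^{\e2}_\II$ must map injectively to $U/2U$ (otherwise some vector primitive in $U'$ would have all inner products even), and the norm profiles $0,0,2$ versus $2,2,2$ mod~$4$ of the two even unimodular lattices then clash. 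You instead first prove that the type of the scale-$2$ constituent is an isometry invariant, via the intrinsic sublattice $L_{\geq2}=\{x\in L: x\cdot L\sset2\Z_2\}$, so that \emph{both} decompositions exhibit $L$ as $1^{\pm2}_\II$ plus a lattice with norms divisible by~$4$, and then play a primitive isotropic vector of the $\hypplane$ presentation against the fact that the other presentation forces every $x$ with $x\cdot L\not\sset2\Z_2$ to have norm $\cong2$ mod~$4$. Both arguments rest on the same observation (norms mod~$4$ of vectors outside $L_{\geq2}$); yours costs an extra step --- the invariance of the scale-$2$ type, which the paper never needs because it only uses the bare existence of the summand $U'$ --- but buys a clean intrinsic statement and a symmetric final contradiction. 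One small notational slip: after dividing inner products by~$2$, the first summand of $\frac12L_{\geq2}$ is $U_1$ rescaled by~$2$, not the unimodular $U_1$; this is harmless since a rescaled-by-$2$ lattice is automatically even, which is all your parenthetical ``since $U_1$ already is'' actually requires.
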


\begin{proof}
If $F$ has two terms of scale~$1$, or a scale~$2$ term of
type~$\I$, then we can use sign walking.  The only remaining case is
$F=1^{-\e2}_\II\,2^{\cdots}_\II\,4_{\cdots}^{\cdots}\cdots$.   
Write $U$ for the $1^{-\e2}_\II$ summand and note that any two elements of
    $L$ with the same image in $L/(2U\oplus U^\perp)=U/2U$ 
    have the same norm mod~$4$.  Direct
calculation shows that the norms of the nonzero elements of $U/2U$ are
$0,0,2$ or $2,2,2$ mod~$4$, depending on $\e$.  Now consider the
summand $U'\iso1^{\e2}_\II$ of $L$ that we assumed to exist.  By
considering norms mod~$4$ we see that $U'/2U'\to U/2U$ cannot be injective, so
it must have image $0$ or~$\Z/2$.  
Since all self-inner products in $U/2U$ vanish, we obtain
the absurdity that all inner products in $U'$ are even.
\end{proof}

\begin{proof}[Proof of Theorem~\ref{thm-equivalence-of-fine-symbols}]
The ``if'' part has already been
proven in Lemmas \ref{lem-sign-walking-for-fine-symbols} and~\ref{lem-giver-permutation-and-conversion}, so we prove ``only if''. 
We assume the result for all lattices of lower dimension.  By scaling
by a power of~$2$ we may suppose $L$ is integral and some inner
product is odd, so each of $F$ and $F'$ has a nontrivial unimodular term.

First suppose $L$ is odd, so the unimodular terms of $F$ and $F'$
have type~$\I$.  By rescaling $L$ by an odd number we may suppose $F$ has
a term $1^+_G$.  By Lemma~\ref{lem-can-get-norm-1-term-or-none-exists} we may apply our moves to $F'$ so
that it also has a term $1^+_G$.   The orthogonal complements of the corresponding summands of
$L$ are both even (if the unimodular Jordan constituents are $1$-dimensional) or both odd
(otherwise).  By Lemma~\ref{lem-perps-of-odd-norm-vectors} these orthogonal complements are
isometric.  They  come with fine decompositions, given by 
the remaining terms in $F,F'$.  By induction on dimension these
fine decompositions are equivalent by our moves.

If $L$ is even then the same argument applies, using Lemmas \ref{lem-can-get-even-unimodular-term-or-none-exists}
and~\ref{lem-perps-of-even-unimodular-sublattices} in place of Lemmas \ref{lem-can-get-norm-1-term-or-none-exists} and~\ref{lem-perps-of-odd-norm-vectors}.
\end{proof}


\begin{thebibliography}{9}

\bibitem{Allcock} Daniel Allcock, {\it The reflective Lorentzian
  lattices of rank~$3$}, in {\it Memoirs of the A.M.S.} (no. 1033,
  vol. 220, Nov. 2012).

\bibitem{AllcockE10} Daniel Allcock, Prenilpotent pairs in the $E_{10}$ root lattice,
  {\it Math. Proc. Camb. Phil. Soc.} {\bf 164} (2017) 473--483.

\bibitem{Bartels} Klaus Bartels, {\it Zur Klassifikation quadratischer
  Gitter \"uber diskreten Bewertungsringen}, Ph.D. Dissertation
  G\"ottingen 1988.

\bibitem{BEF} 
	Jan Hendrik Bruinier, Stephan Ehlen, and Eberhard Freitag,
	Lattices with many Borcherds Products,
	Mathematics of Computation {\bf 85} (2016) 1953--1981.

\bibitem{Cassels} J.W.S. Cassels, {\it Rational Quadratic Forms},
  Academic Press 1968.

\bibitem{Eichler}
  M. Eichler, {\it Quadratische Formen und Orthogonal Gruppen}, Springer-Verlag, 1952.

\bibitem{Hohn}
	Gerald H\"ohn and Geoffrey Mason,
	The 290 fixed-point sublattices of the Leech lattice,
	Journal of Algebra {\bf 448} (2016) 618--637.

\bibitem{SPLAG} J.H. Conway and N.J.A. Sloane, {\it Sphere Packings,
  Lattices and Groups}, 2nd ed., Springer 1993.

\bibitem{Jones} Burton W. Jones, A canonical quadratic form for the
  ring of $2$-adic integers, {\it Duke Math. J.\/} {\bf 11} (1944) 715--727.

\bibitem{Pall} Gordon Pall, The arithmetical invariants of quadratic
  forms, {\it Bulletin of the A.M.S.\/} {\bf 51} (1945) 185--197.

\bibitem{Turkalj}
	Ivica Turkalj,
	Totally-Reflective Genera of Integral Lattices,
	preprint 2016, arxiv:1503.04428.

\bibitem{Xu} Fei Xu, Minimal norm Jordan splittings of quadratic lattices
	over complete dyadic discrete valuation rings, {\it Arch. Math.} {\bf 81} (2003)
	402--415.

\end{thebibliography}
\end{document}